\keywords{family of complex varieties, birational fibers, deformations supported on a divisor} 
\subjclass[2020]{14D06 14E05  14J40 32M25}
\newcommand*{\ext}{\mathcal{E}\kern -.7pt xt}
\newcommand*{\hhom}{\mathcal{H}\kern -.7pt om}
\theoremstyle{plain}
\newtheorem{thm}{Theorem}[section]
\newtheorem*{mthm*}{Main Theorem}
\newtheorem{prop}[thm]{Proposition}
\newtheorem{cor}[thm]{Corollary}
\newtheorem{lem}[thm]{Lemma}
\theoremstyle{definition}
\newtheorem{defn}[thm]{Definition}
\newtheorem{expl}[thm]{Example}
\newtheorem*{ackn}{Acknowledgment}
\newtheorem{rmk}[thm]{Remark}
\newcommand{\sD}{\mathcal{D}}
\newcommand{\sE}{\mathcal{E}}
\newcommand{\sF}{\mathcal{F}}
\newcommand{\sG}{\mathcal{G}}
\newcommand{\sL}{\mathcal{L}}
\newcommand{\sM}{\mathcal{M}}
\newcommand{\sO}{\mathcal{O}}
\newcommand{\sX}{\mathcal{X}}
\newcommand{\mC}{\mathbb{C}}
\newcommand{\mD}{\mathbb{D}}
\newcommand{\mP}{\mathbb{P}}
\newcommand{\mR}{\mathbb{R}}
\newcommand{\Ker}{\mathrm{Ker}\,}
\newcommand{\rank}{\mathrm{rank}\,}
\numberwithin{equation}{section}
\newcommand{\beba}  {\begin{equation}\begin{array}{rcl}}
\newcommand{\eaee}  {\end{array}\end{equation}}
\def\l@section{\@tocline{1}{0pt}{1pc}{}{}}
\def\l@subsection{\@tocline{2}{0pt}{1pc}{4.6em}{}}
\def\l@subsubsection{\@tocline{3}{0pt}{1pc}{7.6em}{}}
\renewcommand{\tocsection}[3]{%
  \indentlabel{\@ifnotempty{#2}{\makebox[2.3em][l]{%
    \ignorespaces#1 #2.\hfill}}}#3}
\renewcommand{\tocsubsection}[3]{%
  \indentlabel{\@ifnotempty{#2}{\hspace*{2.3em}\makebox[2.3em][l]{%
    \ignorespaces#1 #2.\hfill}}}#3}
\renewcommand{\tocsubsubsection}[3]{%
  \indentlabel{\@ifnotempty{#2}{\hspace*{4.6em}\makebox[3em][l]{%
    \ignorespaces#1 #2.\hfill}}}#3}
\title{On supported deformations and birational isotriviality}
\author{Luca Rizzi}
\address{Luca Rizzi\\Center for Geometry and Physics \\
	Institute for Basic Science (IBS)\\
	Pohang 37673\\ Korea,
	\texttt{lucarizzi@ibs.re.kr}}
\author{Francesco Zucconi}
\address{Francesco Zucconi\\Department of Mathematics, Computer Science and Physics \\
Universit\`a degli Studi di Udine\\
Udine, 33100\\ Italia
\texttt{francesco.zucconi@uniud.it}}
\begin{document}

\markboth{}{}

\begin{abstract}

It is well known that the general fibers of a fibration $f\colon X\to B$ are isomorphic if the general Kodaira-Spencer class vanishes. In this paper we consider the birational analogue when the general Kodaira-Spencer class is supported on a divisor.
\end{abstract}
\maketitle
\section{Introduction}
Given a proper holomorphic fibration over a curve $f\colon X\to B$, the link between the vanishing of the general Kodaira-Spencer class and the isomorphism among general fibers is well known, see \cite{KS,KM}. 

On the other hand, we prove that if the general fibers of $f$ are birational, then their Kodaira-Spencer classes are supported on a divisor, and this divisor is not movable. 
This result is possibly well-known, but to the best of our knowledge we did not find a reference for it: see Theorem  \ref{birimplicasupp}.

We recall the definition of supported deformation. Let $Z$ be a smooth complex projective variety, $T_Z$ its tangent sheaf and $D$ an effective divisor on $Z$. An infinitesimal deformation class $\zeta\in H^1(Z,T_Z)$ is said to be \textit{supported on} $D$ if $\zeta$ is in the kernel of the natural map $$H^1(Z,T_Z)\to H^1(Z,T_Z(D)).$$

When $Z$ is a curve, a deformation supported on a single point is known as Schiffer variation, see \cite{SS}. These are very well studied, cf. \cite{ACG}. In the case of arbitrary dimension see  \cite{victor,RZ1}.

 The main result of this paper gives conditions for the vice versa of the above mentioned Theorem \ref{birimplicasupp}, that is it proves that the general fibers are birational if the general Kodaira-Spencer class is supported on a suitable divisor.  Clearly there is a vast literature on deformations of the couple $(Z,D)$. Nevertheless, our main result fills a gap which, once identified, seems very natural to tackle.

In the classical case of a proper holomorphic submersion over a  disk $f\colon\sX\to\Delta$, the existence of a holomorphic vector field on $\sX$ that lifts the coordinate tangent vector field on $\Delta$ implies that  the family is locally trivial; see \cite{KS,KM}, see also \cite{M}. This trivialisation is induced by the flow of the vector field on $\sX$. We stress that in this argument the properness of $f\colon\sX\to\Delta$ plays a crucial role. Indeed, it is precisely the fact that the fibers are compact that ensures that the integral curves are defined over a disk of positive radius independent of their respective initial point on the central fiber.
 
As a first step for our main result, in Proposition \ref{vettorepoli} we show that when the Kodaira-Spencer classes are supported on an effective divisor, there exists a meromorphic vector field on $\sX$ with poles on the divisor and lifting the tangent vector field on $\Delta$.

 Now the flow argument cannot be immediately adapted to show that the fibers are isomorphic outside this divisor; the main obstacle is that we have  lost the properness condition. Intuitively, it can happen that the closer a point of the central fiber is to the poles of the vector field, the smaller the radius where the integral curve is defined. Hence  it may not be possible to define the flow over a disk of fixed positive radius.
 
 However, a classical result, called Volumetric Theorem, see \cite[Theorem 1.5.3]{PZ}, ensures birationality among the fibers if a certain volume constrain holds for $f\colon\sX\to\Delta$. This constrain is strongly related to supported deformations.
 The main idea of this paper is to combine geometrical arguments relying on the flow of the meromorphic vector field with a similar volume constrain. Indeed volume constrains control the flow of the meromorphic vector field.

More precisely, consider $f\colon X\to B$ a semistable fibration, see \cite{Il}, with fibers of general type between a smooth complex $n+1$ dimensional projective variety $X$ and a smooth complex projective curve $B$. Let $\mD^1$ be the local system on $B$  of relative 1-forms defined by the de Rham closed $1$-forms of $X$, and $U=\langle\eta_1,\dots,\eta_{k+1}\rangle$, $0<k\leq n$, be a  subspace generated by $k+1$-linearly independent monodromy invariant sections of $\mD^1$. We say that $U$ is a {\it{a volume detecting subspace}} if $\eta_1\wedge\dots\wedge\eta_{k+1}=0$ and if $U$ is fiberwise strict, see Definition \ref{volumedet} for all the details and also \cite{Ca2} for the notion of strictness of a vector space of differential 1-forms.

 We consider  a  horizontal divisor $\sD$ on $X$ contained in the common zeroes of the relative forms $\omega_i :=\eta_1\wedge\dots\wedge\hat{\eta_i}\wedge\dots\wedge\eta_{k+1}$, $i=1,...,k+1$, and we denote by $D_b$ the restriction of $\sD$ on the general fiber $X_b$. We say that $\sD$ is relatively non movable if $h^0(X_b,\sO_{X_b}(D_b))=1$ for a general point $b$ in $B$. We also say that the Kodaira-Spencer class  is generically fiberwise supported on $\sD$ if $\xi_b$ is supported on $D_b$, where $\xi_b$ is the Kodaira-Spencer class of $X_b$ and $b$ is a general point of $B$, see Definition \ref{supportato3} and Remark \ref{generalita}.
 
 We prove in Proposition \ref{esisteY} that a fibration $f\colon X\to B$ with a volume detecting subspace $U$ and  relatively non movable  divisor $\sD$  and with generically fiberwise supported Kodaira-Spencer class   has a surjective morphism $h\colon X\to Y$ over a $k$-dimensional normal variety $Y$. We denote by $\sF$ the foliation induced by the relative tangent bundle of $h$.
 
 When $\sD$ is invariant under $\sF$, see Definition \ref{invariante}, we can overcome the lack of properness mentioned above.
  We prove:

 \begin{mthm*}Let $f\colon X\to B$ be a semistable fibration with fibers of general type. Assume that there exist a volume detecting subspace $U$ with a relatively non movable divisor $\sD$. If the Kodaira-Spencer class is generically fiberwise supported on $\sD$ and $\sD$ is $\sF$-invariant then  the general fibers of $f\colon X\to B$ are birational.
 \end{mthm*}
 See Theorem \ref{main}.
 
The two conditions on $\sD$, that is $\sD$ is invariant under $\sF$ and it is relatively not movable, occur when the fibers are birational, see Remark \ref{necessarie} and \ref{necessarie2}. 
Indeed, these conditions are strictly necessary, as we show in the very explicit examples and constructions of Section \ref{sez6}. To sum up, they can be considered very natural hypotheses to obtain birationality among fibers of a fibration with generically fiberwise supported Kodaira-Spencer class.

Moreover, this theorem is truly a higher dimensional geometry result. Indeed, for the fibered surface case, that is $n=1$, the meromorphic vector field is actually holomorphic under the hypotheses of the Main Theorem; this is consistent with the fact that birational smooth curves are in fact isomorphic; see: Remark \ref{dimensione1}.

Finally in Theorem \ref{volumetrico}, we obtain the global version of the Volumetric Theorem,  \cite[Theorem 1.5.3]{PZ}, as an immediate corollary of the Main Theorem; then many applications follow, see for example those contained in \cite{CRZ}.
 
The same arguments of the Main Theorem also apply  in some classical cases; in Subsection \ref{bogo} we consider the case of Bogomolov sheaves, see \cite{B,Cam,V}.

The paper is structured as follows. After giving the necessary notation and setting in Section 2, in Section 3 we recall the definition of deformation supported on a divisor and some immediate key properties. In Section 4 we show that if the general fibers of a fibration are birational then 
their Kodaira-Spencer class is supported on a divisor, while Section 5 is devoted to the vice versa of this result, that is the Main Theorem above.
Finally, Section 6 contains local and global examples showing the necessity of the assumptions of the Main Theorem. This section contains also the aforementioned global version of the Volumetric Theorem.

\begin{ackn}
	The first author has been supported by the Institute for Basic Science (IBS-R003-D1). The second author has been supported by the grant DIMA Geometry PRID\-ZUCC, by PRIN 2017 Prot. 2017JTLHJR \lq\lq Geometric, algebraic and analytic methods in arithmetics\rq\rq and by the project DM737 RIC COLLAB ZUCCONI \lq\lq Birational geometry, Fano manifolds and torus actions\rq\rq CUP G25F21003390007, European Union funds, NextGenerationEU.
	Both authors thank the INdAM Advanced grant Hyperbolicity in Diophantine Geometry directed by Prof. Corvaja for its support.
	
	The authors are members INdAM-GNSAGA.
\end{ackn}

\section{Notation and Setting}
\label{sez1}
Let $X$ be an $n+1$-dimensional smooth complex projective variety and $B$ a smooth complex projective curve.
By $f\colon X\to B$ we {always} mean a semistable fibration, that is a proper surjective morphism with connected fibers $X_b:=f^{-1}(b)$, $b\in B$, and such that the possible singular fibers are reduced and normal crossing, see \cite{Il}. 
 We remark that by the semistable reduction theorem, see cf. \cite{KKMSD,CD}, we can always reduce a fibration to this case up to a sequence of blow-ups of $X$ and cyclic Galois coverings of the base. We denote by $E\subset B$ the divisor of singular values of $f\colon X\to B$ and by $W$ the (support of the) inverse image $f^{-1}(E)$.
 \subsection{The Global Kodaira-Spencer morphism}

We recall some useful notions; see: \cite{RZ4}. The short exact sequence 
\begin{equation}
\label{seq}
0\to f^*\omega_B\to \Omega^1_X\to \Omega^1_{X/B}\to 0
\end{equation} defines the sheaf of holomorphic {relative differentials} $\Omega^1_{X/B}$. This sheaf in general is not locally free and for this reason it is often convenient to consider the logarithmic version of (\ref{seq}):
\begin{equation}
\label{seqlog}
0\to f^*\omega_B(E)\to \Omega^1_X(\log W)\to \Omega^1_{X/B}(\log)\to 0.
\end{equation}
We briefly recall that if $W$ is locally given by $z_1z_2\cdots z_k=0$ in a suitable local coordinate system, the sheaf $\Omega^1_X(\log W)$ of {logarithmic differentials} is the locally free $\sO_X$-module generated by $dz_1/z_1,\ldots,dz_k/z_k,dz_{k+1},\ldots,dz_{n+1}$; see: \cite{De}. The $p$-wedge product of $\Omega^1_{X/B}(\log)$ is denoted by $\Omega^p_{X/B}(\log)$, $p=1,\dots,n$. 

In the case of a semistable fibration, (\ref{seqlog}) is an exact sequence of vector bundles.
It is easy to see that there is an inclusion $\Omega^1_{X/B}\hookrightarrow \Omega^1_{X/B}(\log)$, so that  $\Omega^1_{X/B}$ is at least torsion free in our setting; see: \cite[Section 2]{RZ4}.

Applying the functor $f_*\hhom(-,\sO_X)$ to Sequence (\ref{seq}) we obtain a morphism
\begin{equation}
	\label{ksf}
	T_B\to  \ext^1_f(\Omega^1_{X/B},\sO_X)
\end{equation} which is called the Global Kodaira-Spencer morphism. For the properties of the functor $\ext^1_f(-,-)$ we refer to \cite{Bir}, here we only recall that  
$$
\ext^1_f(\Omega^1_{X/B},\sO_X)\otimes \mC(b)=\textnormal{Ext}^1(\Omega^1_{X_b},\sO_{X_b})=H^1(X_b,T_{X_b})
$$ for a general point $b\in B$. 

The evaluation of (\ref{ksf}) on the general $b\in B$ gives the usual Kodaira-Spencer map 
$$
T_{B,b}\to  H^1(X_b, T_{X_b}).
$$ See for example \cite{Se}.

 Denote by $\Delta\subset B$ an open subset with local coordinate $t$, then the Global Kodaira-Spencer map associates to $\frac{\partial}{\partial t}\in \Gamma(\Delta, T_B)$ a section $\xi\in \Gamma(\Delta, \ext^1_f(\Omega^1_{X/B},\sO_X))$
which, at a general point $b\in \Delta$, recovers the  Kodaira-Spencer class $\xi_b\in H^1(X_b, T_{X_b})$.

 \subsection{The local system of de Rham closed forms}
 \label{The local system}
Denote by $\Omega^1_{X,d}$  the sheaf of de Rham closed holomorphic $1$-forms on $X$, hence $\Omega^1_{X,d}\subset\Omega^1_{X}(\log W)$ and consider the composition
$
f_*\Omega^1_{X,d}\hookrightarrow f_*\Omega^1_{X}(\log W)\to f_*\Omega^1_{X/B}(\log).
$
By \cite[Lemma 2.6]{RZ4} the image $\mD^1$ of the sheaf morphism $f_*\Omega_{X,d}^{1}\to f_*\Omega^1_{X/B}(\log)$ is a local system. 
Equivalently, $\mD^1$ can be seen as the  sheaf defined by the following short exact sequence: 
\begin{equation}
	\label{dx}
	0\to \omega_B\to f_*\Omega^1_{X,d}\to\mD^1\to 0.
\end{equation}

\section{Supported deformations}
We are interested in the case where the image of the Kodaira-Spencer map is a supported infinitesimal (first order) deformation.

\label{sez3}
\subsection{Infinitesimal deformations and divisors}
We recall what it means for $\zeta\in H^1(Z,T_Z)$ to be supported on a divisor $D$ of 
a smooth complex projective variety $Z$.
\begin{defn}
\label{supportato}
We say that the infinitesimal deformation $\zeta\in H^1(Z,T_Z)$ is supported on an effective divisor $D$ if $\zeta$ is in the kernel of the natural map
$$
H^1(Z,T_Z)\to H^1(Z,T_Z(D)).
$$
\end{defn}
\begin{rmk}\label{remarksplit} By the functorial properties of the $\text{Ext}$ functor, since $H^1(Z,T_Z)\cong \text{Ext}^1(\Omega^1_Z,\sO_Z)\cong \text{Ext}^1(\omega_Z,\Omega^{n-1}_Z)$, $\zeta\in H^1(Z,T_Z)$ is supported on $D$ if and only if it is element of the kernel of the homomorphism
	$$
	\text{Ext}^1(\Omega^1_Z,\sO_Z)\to \text{Ext}^1(\omega_Z(-D),\Omega^{n-1}_Z).
	$$
	This means that $\zeta$ is associated to an extension
	$$
	0\to \sO_{Z}\to \sE\to \Omega^1_{Z}\to 0
	$$ that has the following top wedge exact sequence
$$
		0\to \Omega^{n-1}_{Z}\to \hat{\sE}\to \omega_{Z}\to 0,
$$ 
and  the corresponding sequence in $\text{Ext}^1(\omega_Z(-D),\Omega^{n-1}_Z)$ splits:
$$
\xymatrix{
0\ar[r]& \Omega^{n-1}_{Z}\ar[r]& \hat{\sE}'\ar[r]& \omega_{Z}(-D)\ar[r] \ar@/_1.3pc/[l]& 0.
}
$$
\end{rmk}

\subsection{Local deformations and divisors}
%
%
Let $\xi\in \Gamma(\Delta, \ext^1_f(\Omega^1_{X/B},\sO_X))$ be the image of $\frac{\partial}{\partial t}\in \Gamma(\Delta, T_B)$ via  the Global Kodaira-Spencer morphism as in Section \ref{sez1}.
Consider $\sD$ a (horizontal) divisor in $X$ and denote by $D_b$ the restriction of $\sD$ to $X_b$.
\begin{defn}	\label{supportato3}
	We say that $\xi\in \Gamma(\Delta, \ext^1_f(\Omega^1_{X/B},\sO_X))$ is supported on a divisor  $\sD$ in $X$ if 
	\begin{equation}
		\xi\in \Ker (\Gamma(\Delta,\ext^1_f(\Omega^1_{X/B},\sO_X))\to \Gamma(\Delta,\ext^{1}_f(\Omega^1_{X/B}(-\sD),\sO_X))).
	\end{equation}
\end{defn}
\begin{rmk}\label{generalita}
Note that if $\xi$ is supported on $\sD$ then  $\xi_b$ is supported on $D_b$ for the general $b\in \Delta$. The vice versa does not hold, since the sheaf $\ext^{1}_f(\Omega^1_{X/B}(-\sD),\sO_X)$ in general has a torsion part. Nevertheless, since in this paper we are interested in results on the general fibers, we will often assume that $\Delta$ is contained in the Zariski open subset of $B$ where $\ext^{1}_f(\Omega^1_{X/B}(-\sD),\sO_X)$ is locally free and the two conditions are equivalent.
\end{rmk}

The following proposition is important for the Main Theorem of this paper.
\begin{prop}\label{vettorepoli}
	Assume that $\xi\in \Gamma(\Delta, \ext^1_f(\Omega^1_{X/B},\sO_X))$ is supported on $\sD$, then, up to shrinking $\Delta$, there exists a meromorphic vector field $v$ in $f^{-1}(\Delta)$ with poles on $\sD$ such that $df(v)=\frac{\partial}{\partial t}$ where defined.
\end{prop}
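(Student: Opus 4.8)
The plan is to turn the supportedness hypothesis into an explicit splitting statement locally on $\Delta$ and then to produce the vector field by interpreting that splitting in terms of the sequence \eqref{seq}. First I would shrink $\Delta$ so that, by Remark \ref{generalita}, the sheaf $\ext^1_f(\Omega^1_{X/B}(-\sD),\sO_X)$ is locally free on $\Delta$ and so that $f^{-1}(\Delta)$ contains no singular fibers; in particular \eqref{seq} becomes, after twisting by $\sO_X(\sD)$, a short exact sequence of vector bundles on $f^{-1}(\Delta)$. Recall that $\xi$ is by definition the image of $\partial/\partial t$ under the connecting map of $f_*\hhom(-,\sO_X)$ applied to \eqref{seq}; concretely, $\xi$ is the obstruction to lifting $\partial/\partial t \in \Gamma(\Delta, T_B)= \Gamma(\Delta, f_*\hhom(f^*\omega_B,\sO_X))$ to a section of $f_*\hhom(\Omega^1_X,\sO_X)$, i.e.\ to a holomorphic vector field $v$ on $f^{-1}(\Delta)$ with $df(v)=\partial/\partial t$. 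Tensoring \eqref{seq} with $\sO_X(\sD)$ and applying $f_*\hhom(-,\sO_X)$ gives the analogous connecting map whose image of $\partial/\partial t$ is exactly the class in $\Gamma(\Delta,\ext^1_f(\Omega^1_{X/B}(-\sD),\sO_X))$ appearing in Definition \ref{supportato3}.

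The key step is then a diagram chase. Supportedness of $\xi$ means that this latter class vanishes, so $\partial/\partial t$ does lift to a section $\tilde v \in \Gamma(\Delta, f_*\hhom(\Omega^1_X(-\sD),\sO_X)) = \Gamma(f^{-1}(\Delta), \hhom(\Omega^1_X,\sO_X)\otimes\sO_X(\sD)) = \Gamma(f^{-1}(\Delta), T_X(\sD))$, where I use that $\sD$ is a Cartier divisor so that $\hhom(\Omega^1_X(-\sD),\sO_X)\cong \hhom(\Omega^1_X,\sO_X)(\sD)=T_X(\sD)$. A section of $T_X(\sD)$ is precisely a meromorphic vector field $v$ on $f^{-1}(\Delta)$ with (at worst a simple) pole along $\sD$. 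The compatibility of the two connecting-map constructions under the twist by $\sO_X(\sD)$ — i.e.\ that the lift of $\partial/\partial t$ in $T_X(\sD)$ still projects to $\partial/\partial t$ under $df$ wherever it is holomorphic — is exactly what makes $df(v)=\partial/\partial t$ off $\sD$. One subtlety: a priori the vanishing of the section in $\Gamma(\Delta,\ext^1_f(\Omega^1_{X/B}(-\sD),\sO_X))$ only gives a lift of $\partial/\partial t$ \emph{after} pushing forward, but since $f$ is proper and we may shrink $\Delta$ this is the same as a lift of $\partial/\partial t$ over $f^{-1}(\Delta)$ by flatness/base change for the relevant $\ext$-sheaves, exactly as in the recollection of the properties of $\ext^1_f$ in Section \ref{sez1}.

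The main obstacle I expect is keeping the bookkeeping of the push-forward $\ext$-sheaves honest: one must check that the twist by $\sO_X(\sD)$ commutes with the formation of $\ext^1_f$ and with restriction to $\Delta$ in the range where everything is locally free, and that the edge map in the local-to-global spectral sequence for $\ext_f$ identifies the "obstruction class" $\xi$ (resp.\ its twist) with the failure of $\partial/\partial t$ to lift to a global (resp.\ meromorphic) vector field. Once the identification $\ext^1_f(\Omega^1_{X/B}(-\sD),\sO_X)\otimes\mC(b)\cong H^1(X_b,T_{X_b}(D_b))$ is in place on the locus where it is locally free, and one knows \eqref{seq} is a sequence of bundles over $f^{-1}(\Delta)$, the rest is the routine diagram chase sketched above. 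I would present the argument by first writing down the two connecting maps side by side, then observing that the vertical maps induced by $\Omega^1_X \hookrightarrow \Omega^1_X(\sD)$ et cetera give a commutative ladder, and finally reading off the lift.
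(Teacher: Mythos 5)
Your argument is correct and follows essentially the same route as the paper: apply $f_*\hhom(-,\sO_X)$ to (\ref{seq}) and to a $\sD$-twisted variant, identify $\xi$ with the connecting-map image of $\frac{\partial}{\partial t}$, and use supportedness plus commutativity of the resulting ladder to lift $\frac{\partial}{\partial t}$, after shrinking $\Delta$, to a section of $T_X(\sD)$ over $f^{-1}(\Delta)$. The only differences are cosmetic: the paper twists only the quotient $\Omega^1_{X/B}$ (via the pullback extension $\sE$ in diagram (\ref{dire})), so the middle pushforward term stays $T_B$ and the poles come from $\sE^\vee\hookrightarrow T_X(\sD)$, whereas you twist the whole sequence (note the sign slip: the twist should be by $\sO_X(-\sD)$, not $\sO_X(\sD)$) and then need the natural inclusion $T_B\hookrightarrow T_B\otimes f_*\sO_X(\sD)$ to read off $df(v)=\frac{\partial}{\partial t}$ away from $\sD$.
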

\begin{proof}
	Sequence  (\ref{seq})  together with its tensor by $\sO_X(-\sD)$ fits into the commutative diagram 
	\begin{equation}\label{dire}
		\xymatrix{0\ar[r]&f^*\omega_B\ar[r]\ar@{=}[d]&\Omega^1_X\ar[r]&\Omega^1_{X/B}\ar[r]&0\\
			0\ar[r]&f^*\omega_B\ar[r]&\sE\ar[r]\ar[u]&\Omega^1_{X/B}(-\sD)\ar@{=}[d]\ar[r]\ar[u]&0\\
			0\ar[r]&f^*\omega_B(-\sD)\ar[r]\ar[u]&\Omega^1_X(-\sD)\ar[r]\ar[u]&\Omega^1_{X/B}(-\sD)\ar[r]&0}
	\end{equation}
	Applying the functor $f_*\hhom(-,\sO_X)$ to Diagram \ref{dire} we obtain the exact sub-diagram 
	\begin{equation}\label{sottodiagramma}
		\xymatrix{f_*T_{X}\ar[r]\ar[d]&T_B\ar@{=}[d]\ar[r]&\ext^1_f(\Omega^1_{X/B},f^*\omega_B)\ar[d]\\
			f_*\sE^\vee\ar[r]&T_B\ar[r]&\ext^1_f(\Omega^1_{X/B}(-\sD),f^*\omega_B)\\
		}
	\end{equation} 
	Since $\xi$ is supported on $\sD$, Diagram \ref{sottodiagramma} gives the correspondence of local sections over $\Delta$
		\begin{equation*}
		\xymatrix{&\frac{\partial}{\partial t}\ar@{=}[d]\ar@{|->}[r]&\xi\ar@{|->}[d]\\
			v\ar@{|->}[r]&\frac{\partial}{\partial t}\ar@{|->}[r]&0\\
		}
	\end{equation*} and so, up to shrinking $\Delta$, the section $v$.
	
	Finally note that $\sE^\vee\hookrightarrow T_X(\sD)$, hence $v\in \Gamma(\Delta,f_*\sE^\vee)$ can be seen as a vector field in $f^{-1}(\Delta)$ with poles on $\sD$.
\end{proof}
\begin{rmk}
	\label{generalita2}
	By Remark \ref{generalita}, the assumption on $\xi$ in Proposition \ref{vettorepoli} can be replaced with the assumption that $\xi_b$ is supported on $D_b$ for general $b\in \Delta$.
\end{rmk}
\begin{rmk}\label{vlocale}
On an open subset where $f$ is a submersion, we  take local coordinates $t,x_1,\dots,x_n$, so that $f$ is given by the projection on the first coordinate. We also assume that $\sD$ has local equation $\mathfrak{d}=0$.  Then the vector field $v$ given by Proposition \ref{vettorepoli} has local expression
$$
	v=\frac{\partial}{\partial t}+\frac{1}{\mathfrak{d}}(\sum_{i=1}^n a_i\frac{\partial}{\partial x_i})
$$ where $a_i$ are local holomorphic functions. The poles on $\sD$ occur only on the vertical part of the vector field.
\end{rmk}

\section{Birational fibers implies supported deformations}\label{sez4}
In this section, we show that if the general fibers of a fibration are birational then the general Kodaira-Spencer class is supported on a divisor, and this divisor is not movable.
\begin{thm}\label{birimplicasupp}
Let $f\colon X\to B$ be a fibration with birational general fibers. Then the Kodaira-Spencer class $\xi_b$ is supported on a divisor $D_b$ for general $b\in B$. Furthermore $h^0(X_b,\sO_{X_b}(D_b))=1$.
\end{thm}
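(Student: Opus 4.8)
The plan is to convert the pointwise hypothesis into birational triviality over a generically finite cover of $B$, extract from it a meromorphic vector field lifting $\partial/\partial t$, and then run the argument of Proposition~\ref{vettorepoli} backwards. So I would first globalise the birationality. Fix a general $b_0\in B$ and put $Y:=X_{b_0}$, a smooth projective variety; by hypothesis $X_b$ is birational to $Y$ for every general $b$. Inside the relative Chow variety parametrising $n$-cycles in the fibres $X_b\times Y$, the locus whose points are closures of graphs of birational maps $X_b\dashrightarrow Y$ is constructible and, by hypothesis, dominates $B$; hence one of its irreducible components $T$ maps onto $B$ by a generically finite map and carries a tautological birational map $X\times_B T\dashrightarrow Y\times T$ over $T$. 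Passing to a smooth projective model $B'$ of $T$ and then to a semistable model $X'\to B'$ of $X\times_B B'$, we obtain a semistable fibration birational over $B'$ to $Y\times B'$ whose general fibre is, after these modifications, isomorphic over the generic point to $X_{\rho(b')}$, where $\rho\colon B'\to B$ is the induced map. Since the Global Kodaira--Spencer morphism is compatible with generically finite base change, and ``being supported on a divisor'' is preserved both by such base change and by birational modifications that are isomorphisms over the general fibre, it suffices to prove the statement after this reduction; equivalently, we may assume there is a birational map $\Phi\colon X\dashrightarrow Y\times B$ over $B$.

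Next I would produce the vector field. Over a coordinate disk $\Delta$ around a general point, the tautological lift of $\partial/\partial t$ on $Y\times\Delta$ (namely $\partial/\partial t$ itself, under the second projection) is transported through $\Phi^{-1}$ to a rational vector field $v$ on $X$ that is holomorphic where $\Phi$ is a local isomorphism and satisfies $df(v)=\partial/\partial t$ there. Let $\sD$ be the codimension-one part of the non-isomorphism locus of $\Phi$, an effective divisor, non-movable on the general fibre because it is contracted by $\Phi$; after shrinking $\Delta$ we may assume $f$ is a submersion over it and $\sD$ has only horizontal components there, so that $D_b:=\sD|_{X_b}$ is a divisor for all $b\in\Delta$. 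In the local coordinates of Remark~\ref{vlocale} the only poles of $v$ are along $\sD$ and occur only in the vertical directions --- its $dt$-coefficient is identically $1$, forced by $df(v)=\partial/\partial t$ --- so $v$ has the shape $\partial/\partial t+\mathfrak{d}^{-m}\sum_i a_i\,\partial/\partial x_i$ with $a_i$ holomorphic. Replacing $\sD$ by $m\sD$, which changes neither its support nor the divisor $D_b$, the field $v$ becomes a section over $\Delta$ of $f_*\sE^\vee$, with $\sE$ as in the middle row of diagram~\ref{dire}, mapping to $\partial/\partial t\in\Gamma(\Delta,T_B)$.

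Finally I would invert the diagram chase. In the commutative diagram~\ref{sottodiagramma} the composite $f_*\sE^\vee\to T_B\to\ext^1_f(\Omega^1_{X/B}(-\sD),f^*\omega_B)$ is zero, being two consecutive maps of a long exact sequence; since $v$ maps to $\partial/\partial t$, the image of $\partial/\partial t$ in $\ext^1_f(\Omega^1_{X/B}(-\sD),f^*\omega_B)$ vanishes, and commutativity of the right-hand square then forces $\xi$ to map to $0$ under $\ext^1_f(\Omega^1_{X/B},f^*\omega_B)\to\ext^1_f(\Omega^1_{X/B}(-\sD),f^*\omega_B)$. Modulo the harmless twist by $f^*\omega_B$ relating this to Definition~\ref{supportato3}, this says exactly that $\xi$ is supported on $\sD$, whence by Remark~\ref{generalita} that $\xi_b$ is supported on $D_b$ for the general $b\in\Delta$; since $\Delta$ surrounds an arbitrary general point and $\rho$ is dominant, the conclusion holds for the general $b\in B$.

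The main obstacle is the first step --- turning ``the general fibres are pairwise birational'' into an honest birational map over a generically finite cover of $B$ --- which rests on the fact that graphs of birational maps sweep out a constructible subset of a relative Chow variety, so that a finite-type component dominates the base; everything afterwards is the formal reversal of Proposition~\ref{vettorepoli} together with the bookkeeping of pole orders and of the base change.
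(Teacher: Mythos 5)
Your argument is correct, and its first half coincides with the paper's: both reduce, over the open locus where the fibres are pairwise birational, to a birational trivialisation $X'\dashrightarrow X_0\times B'$ after a finite (or generically finite) base change. The paper simply quotes \cite[Theorem 1.1]{BBG} for this; your relative-Chow sketch is the standard argument behind such statements and does work over $\mC$ with a one-dimensional base (countably many Chow components, each image constructible, so one must be cofinite in $B$), but it is the least rigorous part of your write-up and is better handled by citing \cite{BBG} as the paper does. Where you genuinely diverge is the second half. The paper resolves the rational map, pulls back $\omega_{X_0}$ to get a line subsheaf of $\Omega^n_{\widetilde X|\widetilde X_b}$ surjecting onto $\omega_{\widetilde X_b}(-\widetilde D_b)$, invokes the splitting criterion of Remark \ref{remarksplit} to conclude that $\tilde\xi_b$ is supported on $\widetilde D_b$, and then descends to $X_b$ via compatibility and a Leray injectivity argument. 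You instead push forward $\partial/\partial t$ through $\Phi^{-1}$ to a rational section of $T_X$, observe that by local freeness of $T_X$ (Hartogs) its polar locus is automatically divisorial and purely vertical since $df(v)=\partial/\partial t$ as rational sections, and then read Proposition \ref{vettorepoli} backwards: the bottom row of diagram (\ref{sottodiagramma}) for the divisor $m\sD$ has vanishing composite, so the existence of the lift $v\in\Gamma(\Delta,f_*\sE^\vee)$ forces the image of $\xi$ in $\ext^1_f(\Omega^1_{X/B}(-m\sD),f^*\omega_B)$ to vanish, and Remark \ref{generalita} gives the fibrewise statement. This is a legitimate converse of Proposition \ref{vettorepoli} (only ``consecutive maps compose to zero'' is needed), and it buys you something the paper's proof does not state explicitly: the section $\xi$ itself, over a disk around a general point, is supported on a global divisor in the sense of Definition \ref{supportato3}, with no resolution or descent step, at the price of the pole-order bookkeeping ($m\sD$ in place of $\sD$, which is harmless for the statement) and of working in the analytic-local framework the paper uses anyway. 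The paper's route, by contrast, is purely fibrewise and exhibits the supporting divisor directly through the canonical sheaf of the product model, which ties in with the volume-detecting theme of the later sections.
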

\begin{proof}
	Consider $f^0\colon X^0\to B^0$ the restriction of $f$ over the Zariski open $B^0\subset B$ where the fibers are all birational and $f$ is smooth. By 
	\cite[Theorem 1.1]{BBG}, up to a finite cover $B'\to B^0$, we have that $X':=X^0\times_{B^0} B'$ 	is birational to the product $X_0\times B'$ where $X_0$ is a projective model
	of the fibers.
	
	Since $B^0$ is a dense open subset of $B$ and taking finite cover does not change the deformation of the general fibers, from now on we work on $X'$, and we still denote a general fiber of $f'\colon X'\to B'$ by $X_b$, and by $\xi_b$ its Kodaira-Spencer class.
	
	We resolve the rational map $X'\dashrightarrow X_0$ 
	by a morphisms $\pi$ to obtain a morphism $\rho\colon \widetilde{X}\to X_0$: 
	$$
	\xymatrix{
	\widetilde{X}\ar^\pi[d]\ar^\rho[rd]\\
	X'\ar@{-->}[r]&X_0.
	}
	$$
	
Let $\tilde{f}:=f'\circ\pi$ and denote by $\widetilde{X}_b$ its fiber over $b\in B$. The general Kodaira-Spencer class $\tilde{\xi}_b\in H^1(\widetilde{X}_b,T_{\widetilde{X}_b})\cong\text{Ext}^1(\Omega^1_{\widetilde{X}_b},\sO_{\widetilde{X}_b})$ is associated to the exact sequence
	$$
	0\to \sO_{\widetilde{X}_b}\to \Omega^1_{\widetilde{X}|\widetilde{X}_b}\to \Omega^1_{\widetilde{X}_b}\to 0.
	$$ 
	The double dual $(\rho^*(\omega_{X_0})_{|\widetilde{X}_b})^{\vee\vee}$ is a line bundle contained in $\Omega^{n}_{\widetilde{X}|\widetilde{X}_b}$. It maps isomorphically onto $\omega_{\widetilde{X}_b}(-\widetilde{D}_b)$ where $\widetilde{D}_b$ is a certain effective divisor in $\widetilde{X}_b$. By Remark \ref{remarksplit}, this shows that $\tilde{\xi}_b$ is supported on $\widetilde{D}_b$. 
	
	To prove the theorem we need to show that ${\xi}_b$ is also supported on a divisor. 
	
	Denoting by $\pi_b\colon \widetilde{X}_b\to X_b$ the restriction of $\pi$, we note that $\pi_b$ is birational for general $b$ and that $\pi_{b*}\sO_{\widetilde{X}_b}(\widetilde{D}_b)=\sO_{X_b}(D_b)$ for a divisor $D_b$ on $X_b$ with $\widetilde{D}_b-\pi_b^*D_b$ exceptional.
	
	Since $\tilde{\xi}_b$ is supported on $\widetilde{D}_b$, its image through the composition 
	$$
	H^1(\widetilde{X}_b,T_{\widetilde{X}_b})\to H^1(\widetilde{X}_b,T_{\widetilde{X}_b}(\widetilde{D}_b))\to H^1(\widetilde{X}_b,(\pi_{b}^*T_{{X}_b})(\widetilde{D}_b))
	$$
	is zero. Hence, by compatibility, the image of ${\xi}_b$ via 
	$$
	H^1({X}_b,T_{{X}_b})\to H^1({X}_b,T_{{X}_b}(D_b))
	$$
	is also zero since the homomorphism
	 $H^1({X}_b,T_{{X}_b}(D_b))\to H^1(\widetilde{X}_b,(\pi_{b}^*T_{{X}_b})(\widetilde{D}_b))$ is injective by the Leray spectral sequence.
	 
	This proves that ${\xi}_b$ is supported on $D_b$, it remains to show that $h^0(X_b,\sO_{X_b}(D_b))=1$. From the above discussion, $\widetilde{D}_b$ is an exceptional divisor for the birational morphism $\widetilde{X}_b\to X_0$ hence $h^0(X_b,\sO_{X_b}(D_b))=h^0(\widetilde{X}_b,\sO_{\widetilde{X}_b}(\widetilde{D}_b))=1$.
\end{proof}
\begin{rmk}\label{necessarie}
From the above theorem and its proof, we note that the supporting divisor satisfies two important conditions. First, it is fiberwise non movable, and second, its image  via the map $X'\dashrightarrow X_0$ is contained in  a proper closed subset of $X_0$. 
In the next section we will prove an inverse of Theorem \ref{birimplicasupp}, so it is natural to expect that these conditions will appear in some form as necessary hypothesis of this inverse.
\end{rmk}

\section{ Supported deformations implies birational fibers}\label{sez5}
Theorem \ref{birimplicasupp} shows that if the general fibers of a fibration are birational, then the general Kodaira-Spencer class $\xi_b$ is supported on a divisor. In this section we study the inverse problem.

\subsection{ Volume detecting subspaces}
Recall that $\mD^1$ is the local system defined by the de Rham closed 1-forms of $X$ as introduced in Subsection \ref{The local system}. 

\begin{defn}\label{volumedet}
We say that a $k+1$-dimensional subspace $U=\langle\eta_1,\dots,\eta_{k+1}\rangle\subset H^0(B,\mD^1)$, $0<k\leq n$, is \emph{volume detecting} if 
\begin{enumerate}
\item  $\eta_1\wedge\dots\wedge\eta_{k+1}=0$ as an element of $f_*\Omega^{k+1}_{X/B}(\log)$ 
\item  the morphism
\begin{equation}\label{strictk}
\bigwedge^{k}U\otimes\sO_B\to f_*\Omega^{k}_{X/B}(\log)
\end{equation}is an injection of sheaves. 
\end{enumerate}
\end{defn} 
We stress that the above second condition is the relative version of the so-called strictness, see: \cite{Ca2}, so we call it in  the same way. 

If $U$ is volume detecting, the relative $k$-forms $\omega_i:=\eta_1\wedge\dots\wedge\hat{\eta_i}\wedge\dots\wedge\eta_{k+1}$, $i=1,\dots,k+1$, generate a sheaf of generic rank one in $\Omega^{k}_{X/B}(\log)$.
We call $\widehat{\sD}$ the divisor given by the horizontal components of the common zeroes of the global sections $\omega_i$ and $\sD$ an effective subdivisor of $\widehat{\sD}$. As before, $D_b$ will denote the restriction of $\sD$ to the fiber $X_b$.

\begin{rmk}\label{rmkkforme}
By taking the tensor of (\ref{strictk}) by the canonical bundle of $B$ we have the injective morphism
	$$
	\bigwedge^{k}U\otimes\omega_B\to f_*\Omega^{k}_{X/B}(\log)\otimes \omega_B.
	$$ 
	Furthermore, note that on $X$ the sheaf $\Omega^{k}_{X/B}(\log)\otimes f^*\omega_B$ injects in  $\Omega^{k+1}_X$ and the quotient is  torsion free by an explicit local computation. In particular 
	$$
	 f_*\Omega^{k}_{X/B}(\log)\otimes \omega_B\to f_*\Omega^{k+1}_X
	$$  is an injection of vector bundles and
	a global $k+1$ form on $X$ which vanishes on the general fiber is a global section of  $\Omega^{k}_{X/B}(\log)\otimes f^*\omega_B$. 
\end{rmk}

As recalled at the beginning of this section, we now consider fibrations such that the general Kodaira-Spencer class is supported on a non movable divisor and study the birationality of the fibers. Hence from now on we denote by $B^0$ the Zariski open subset of $B$ where the fibration $f$ is smooth, the morphism (\ref{strictk}) is an injection of vector bundles, and the Kodaira-Spencer class is supported on a non movable divisor at every point. We denote $X^0:=f^{-1}(B^0)$.

\begin{prop}\label{wedgezerok}
Assume that there exists a  volume detecting subspace $U$ defining a divisor $\widehat{\sD}$ as above. If, for a certain $\sD\subseteq \widehat{\sD}$, $\xi_b$ is supported on $D_b$ and  $h^0(X_b,\sO_{X_b}(D_b))=1$, for general $b\in B$, then there exist closed forms $s_1,\dots,s_{k+1}\in H^0(X^0,\Omega^1_{X^0})$ liftings of respectively $\eta_1,\dots,\eta_{k+1}$ such that
		$s_1\wedge\dots\wedge s_{k+1}=0$.
\end{prop}
\begin{proof}
	By \cite[Lemma 2.2]{RZ4}, Sequence (\ref{dx}) splits, hence we can find $s_1,\dots,s_{k+1}\in H^0(X,\Omega^1_X)$ lifting of $\eta_1,\dots,\eta_{k+1}$ respectively. Now consider $s_1\wedge\dots\wedge s_{k+1}\in H^0(X,\Omega^{k+1}_X)$ and note that by the hypothesis $\eta_1\wedge\dots\wedge\eta_{k+1}=0$ and by Remark \ref{rmkkforme} we actually have that $s_1\wedge\dots\wedge s_{k+1}\in H^0(X,\Omega^{k}_{X/B}(\log)\otimes f^*\omega_B)$. 
	
	By the proof of Proposition \ref{vettorepoli} we can consider the morphism $f_*\sE^\vee\to T_B$ that now we denote by $\alpha$. It fits into the following commutative diagram:
	\begin{equation}\label{diag}
		\xymatrix{
			f_*\sE^\vee\ar^{\alpha}[r]\ar^-{\alpha'}[d]&T_B\ar^{\beta}[d]\\
			\bigwedge^{k}U\otimes f_*\sO_X(\sD)\ar^-{\beta'}[r]&f_*\Omega^k_{X/B}(\log)	}
	\end{equation} 
	The morphism $\beta$ is given by $s_1\wedge\dots\wedge s_{k+1}\in H^0(X,\Omega^{k}_{X/B}(\log)\otimes f^*\omega_B)=H^0(B,f_*(\Omega^{k}_{X/B}(\log))\otimes \omega_B)$.
	The vertical arrow $\alpha'$ is given on a section $\theta$ of $f_*\sE^\vee$ by
	\[
	\theta\mapsto \sum_i (-1)^i  \eta_1\wedge\dots\wedge\hat{\eta_i}\wedge\dots\wedge\eta_{k+1}\otimes s_i(\theta)
	\] where $s_i(\theta)$ is the standard contraction; note that $\theta$ is a vector field on $X$ with poles on $\sD$ and $s_i$ is a 1-form on $X$.
	The  horizontal arrow $\beta'$ is given by  $\eta_1\wedge\dots\wedge\hat{\eta_i}\wedge\dots\wedge\eta_{k+1}\mapsto \omega_i$ and 
	 by the fact that  $\sD$ is a divisor of common zeroes of the $\omega_i$. 
	 
	 Now take a small disk $\Delta$ in the open part $B^0$ and take the vector field $v\in \Gamma(\Delta,f_*\sE^\vee)$ given by Proposition \ref{vettorepoli}. Since
	 $\beta(\alpha(v))=\beta'(\alpha'(v))$ and since $h^0(X_b,\sO_{X_b}(D_b))=1$,  we immediately get, in $f^{-1}(\Delta)$, 
	 \begin{equation}\label{localwedge}
	 s_1\wedge\dots\wedge s_{k+1}=\sum \omega_i\wedge f^*\sigma_i
	 \end{equation}
	 with $\sigma_i=s_i(v)dt\in \Gamma(\Delta,\omega_B)$.

	 We can apply the above argument on an open cover of $B^0$ and by Remark \ref{rmkkforme}, the relation (\ref{localwedge}) can be globalised on $X^0$, i.e. we can write:
	 $$
s_1\wedge\dots\wedge s_{k+1}=\sum \omega_i\wedge f^*\Sigma_i
$$	 where $\Sigma_i\in H^0(B^0,\omega_{B^0})$ (eventually zero).

Finally, for every $i=1,...,k+1$ we change liftings of $\eta_i$ as $\tilde{s}_i:=s_i-f^*\Sigma_i$ to get 
$$
\tilde{s}_1\wedge\dots\wedge\tilde{s}_{k+1}=s_1\wedge\dots\wedge s_{k+1}-\sum \omega_i\wedge f^*\Sigma_i=0.
$$

The forms $\tilde{s}_i$ are defined on $X^0$ and are clearly de Rham closed.
\end{proof}

\begin{rmk}
	It is easy to see that from the strictness hypothesis (\ref{strictk}), the 1-forms $s_i$ provided by the previous result are in fact unique.
\end{rmk}

\begin{prop}\label{esisteY}
	Under the same hypotheses of Proposition \ref{wedgezerok}, there is a surjective morphism $h\colon X\to Y$ over a normal $k$-dimensional variety $Y$ of Albanese general type.
\end{prop}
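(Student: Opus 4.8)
The plan is to apply the generalized Castelnuovo--de Franchis theorem to the holomorphic $1$-forms $s_1,\dots,s_{k+1}$ produced by Proposition~\ref{wedgezerok}. As a preliminary step I would verify that these forms span a subsheaf of $\Omega^1_X$ of generic rank exactly $k$. That the rank is at most $k$ is precisely the relation $s_1\wedge\dots\wedge s_{k+1}=0$ of Proposition~\ref{wedgezerok}. For the reverse bound I would note that, under the natural map $\Omega^{k}_X\to\Omega^{k}_{X/B}(\log)$, the section $s_1\wedge\dots\wedge\widehat{s_i}\wedge\dots\wedge s_{k+1}$ maps to $\omega_i=\eta_1\wedge\dots\wedge\widehat{\eta_i}\wedge\dots\wedge\eta_{k+1}$ (since each $s_j$ lifts $\eta_j$), and that $\omega_i\not\equiv 0$: indeed $\eta_1\wedge\dots\wedge\widehat{\eta_i}\wedge\dots\wedge\eta_{k+1}$ is a nonzero element of $\bigwedge^{k}U$, and by strictness of $U$ the morphism $\bigwedge^{k}U\otimes\sO_B\to f_*\Omega^{k}_{X/B}(\log)$ is injective. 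Hence $s_1\wedge\dots\wedge\widehat{s_i}\wedge\dots\wedge s_{k+1}\not\equiv 0$ on $X$ for every $i$. Finally, being holomorphic $1$-forms on the smooth projective variety $X$, the $s_i$ are $d$-closed.

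Next I would invoke the generalized Castelnuovo--de Franchis theorem, in the form of \cite{Ca2} and \cite[Theorem~5.8]{RZ4}. Since $s_1\wedge\dots\wedge s_{k+1}=0$ while all of its $k$-fold subwedges are nonzero, the common kernel $\bigcap_{i}\Ker\bigl(s_i\colon T_X\to\sO_X\bigr)$ is an involutive subsheaf of $T_X$ of generic rank $n+1-k$ — involutivity holding because the $s_i$ are closed — with algebraic leaves; passing to the leaf space gives a surjective morphism with connected fibers $h\colon X\to Y$ onto a normal projective variety $Y$ of dimension $k$, together with $1$-forms $g_1,\dots,g_{k+1}\in H^0(Y,\Omega^1_Y)$ such that $s_i=h^*g_i$ for every $i$. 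The forms $g_i$ are linearly independent, because $h$ is dominant and $s_1,\dots,s_{k+1}$ are linearly independent (being lifts of the linearly independent $\eta_i$); in particular $h^0(Y,\Omega^1_Y)\ge k+1>\dim Y$.

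It remains to show that $Y$ is of Albanese general type, which I would establish by contradiction. It suffices to treat the case $Y$ smooth, since maximal Albanese dimension is a birational invariant. Suppose the image of the Albanese map of $Y$ has dimension strictly less than $k=\dim Y$. Then every holomorphic $1$-form of $Y$ — in particular each $g_i$ — is pulled back from that image, so each $k$-fold wedge $g_1\wedge\dots\wedge\widehat{g_i}\wedge\dots\wedge g_{k+1}$ is pulled back from a $k$-form on a variety of dimension less than $k$, hence vanishes identically. Applying $h^*$ would then give $s_1\wedge\dots\wedge\widehat{s_i}\wedge\dots\wedge s_{k+1}=0$, contradicting the preliminary step. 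Therefore the Albanese map of $Y$ is generically finite onto its image, i.e. $Y$ is of Albanese general type.

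The step I expect to be the main obstacle is having at one's disposal the ``rank~$k$'' version of the generalized Castelnuovo--de Franchis theorem: the hypothesis here is only the single relation $s_1\wedge\dots\wedge s_{k+1}=0$, not the pairwise vanishing $s_i\wedge s_j=0$ of the classical isotropic statement, and one must in addition keep control of the singularities of $Y$ and of its holomorphic $1$-forms. If one wished to argue from scratch, the real content is that the integrable foliation $\bigcap_i\Ker\bigl(s_i\colon T_X\to\sO_X\bigr)$ has algebraic leaves, which as usual rests on the projectivity of $X$ together with the period structure of the closed forms $s_i$.
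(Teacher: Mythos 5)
Your proposal is correct and takes essentially the same route as the paper, whose proof is a one-line appeal to the generalized Castelnuovo--de Franchis theorem \cite[Theorem 1.14]{Ca2} applied to the forms $s_1,\dots,s_{k+1}$ from Proposition \ref{wedgezerok}. The additional checks you carry out (strictness forces the $k$-fold subwedges, hence the generic rank, to be exactly $k$, and $Y$ is of Albanese general type with $h^0(Y,\Omega^1_Y)\ge k+1>\dim Y$) are precisely the hypotheses and conclusions packaged in that theorem, so they simply make explicit what the paper treats as immediate.
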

\begin{proof}
By the fact that they are de Rham closed, the forms $s_1,\dots,s_{k+1}\in H^0(X^0,\Omega^1_{X^0})$ of Proposition \ref{wedgezerok} define a foliation $\sF$ on $X^0$ of rank $n+1-k$. The field of rational functions constant on the leaves of $\sF$ is contained in the field of rational functions of $X$, $\mC(\sF)\subset \mC(X)$, hence it gives a rational dominant map $X\dashrightarrow Y'$.

A standard local argument, see for example the proof of \cite[Theorem 1.14]{Ca2}, shows that the forms $s_i$ are pullback of meromorphic forms $\sigma_i$ on $Y'$, so in particular $\dim Y'=k$. The restriction $X_b\dashrightarrow Y'$ to the general fiber is again dominant by the strictness hypothesis and, since the forms $s_i$ are holomorphic when restricted to the general $X_b$, the $\sigma_i$ must be holomorphic on $Y'$, since the pullback of a form with nontrivial poles via a dominant map has nontrivial poles.

Hence the $s_i$ are actually elements of $H^0(X,\Omega^1_{X})$ and a straightforward application of the Generalised version of the Castelnuovo-de Franchis Theorem, \cite[Theorem 1.14]{Ca2},\cite[Proposition II.1]{R}, \cite{Mok}, gives the morphism $h\colon X\to Y$.
\end{proof}
	
The proof of Proposition \ref{wedgezerok} gives the following result that we collect in a separate Lemma for future reference.
\begin{lem}\label{vstanelker}
	In the same setting of Proposition \ref{wedgezerok}, 
	let $s_1,\dots,s_{k+1}\in H^0(X^0,\Omega^1_{X^0})$ be the  (unique) closed forms such that $s_1\wedge\dots\wedge s_{k+1}=0$ and let $v\in \Gamma(f^{-1}(\Delta),\sE^\vee)$, $\Delta\subset B^0$, be a vector field with poles on $\sD$ 
	as in Proposition \ref{vettorepoli}.
	Then $s_i(v)=0$ in $f^{-1}(\Delta)$, $i=1,\dots,n+1$.
\end{lem}
\begin{proof}
	We use Diagram (\ref{diag}) over $\Delta$. Then $\beta\equiv 0$ since $s_1\wedge\dots\wedge s_{k+1}=0$.
	On the other hand note that on $\Delta\subset B^0$, $\beta'$ is an injection of vector bundles by the hypothesis of strictness of $U$ and by the fact that $f_*\sO_X(\sD)$ is a line bundle on $\Delta$.
	It  follows that
	\[
	\sum_i (-1)^i  \eta_1\wedge\dots\wedge\hat{\eta_i}\wedge\dots\wedge\eta_{k+1}\otimes s_i(v)=0
	\] as section of $\bigwedge^{k}U\otimes f_*\sO_X(\sD)$, hence $s_i(v)=0$ for $i=1,\dots,n+1$.
\end{proof}
\subsection{The two foliations}

To understand our main result, we need to introduce two  foliations. We refer for example to \cite{D} for all the basic definitions on foliations.

As in Proposition \ref{esisteY}, we denote by $\sF\subset T_{X}$ the foliation given by the kernel of the 1-forms $s_i$ of Proposition \ref{wedgezerok}. 
It is easy to see that $\sF$ is the foliation associated to the morphism $h\colon X\to Y$ of Proposition \ref{esisteY}. 
In other words $\sF$ is the relative tangent $T_{X/Y}$ of $h$. In particular $\sF$ has $\rank \sF=n+1-k$. 

Consider now the vector field with poles $v$ of Proposition \ref{vettorepoli}. It  can be expressed locally as $v=\frac{1}{\mathfrak d}\hat{v}$ where $\mathfrak d=0$ is a local equation of $\sD$. The local holomorphic vector fields $\hat{v}$ define (up to saturation) a rank one foliation on $f^{-1}(\Delta)$. We denote this foliation by $\sF_v$. We stress that the codimension 2 subset $\Sigma_v$ where $\sF_v$ is not regular is contained in the divisor $\sD$; see the local expression of $v$ in Remark \ref{vlocale}. 

Lemma \ref{vstanelker} can be reformulated  by the following
\begin{cor} \label{vstanelker2}
Let $\Delta\subset B^0$, then $\sF_v\subset \sF$ in $f^{-1}(\Delta)$.
\end{cor}

\subsection{Main Result}
Consider a proper holomorphic submersion over a  disk $\sX\to\Delta$. It is well known, see \cite{M}, that the existence of a holomorphic vector field $v$ on $\sX$ which lifts the vector field $\partial/\partial t$ on $\Delta$ shows that the family is locally trivial. The key idea is that around each point of the central fiber, it is possible to locally find the flow of $v$. By the fact that the fibers are compact, this process is finite, hence, up to shrinking the base $\Delta$, the flow of $v$ gives the desired trivialisation.

In our setting, we have introduced in Proposition \ref{vettorepoli} a vector field $v$ on $f^{-1}(\Delta)$ with poles on a divisor $\sD$. Since $v$ can be seen as a  holomorphic vector field in $f^{-1}(\Delta)\setminus \sD$, it is natural to ask if a similar argument proves that the fibers of $f$ over $\Delta$ are biholomorphic outside a divisor. The problem is that if we remove a divisor the fibers are no longer compact, hence the above argument fails in general since it may not be possible to define the flow over a  fixed $\Delta$ of strictly positive radius. We give examples of this in the last Section. Nevertheless, the notion of volume detecting subspace together with a natural tangency condition give the vice versa of Theorem \ref{birimplicasupp}. 

We recall that 
	\begin{defn}\label{invariante} A subvariety $W\subseteq X$ is invariant under a foliation $\sF$ if for every local section $\partial$ of $\sF$ over some
		open subset $U$ of $X$, $\partial(I_{W\cap U})\subset I_{W\cap U}$ where $I_{W\cap U}$ is the defining ideal of $W$ in $U$.
	\end{defn}
	To prove that $W$ is invariant under $\sF$ it is enough to show
	that $W\cap U$ is invariant under $\sF|_U$ for some open set $U\subseteq X$ such that $W\cap U$ is dense in $W$.
	See \cite{D}.

\begin{thm}[Main Theorem]\label{main}
	Let $f\colon X\to B$ be a semistable fibration with fibers of general type. Assume that there exist a volume detecting subspace $U$ with associated divisor $\sD$ such that $h^0(X_b,\sO_{X_b}(D_b))=1$ for general $b$. If $\xi_b$ is supported on $D_b$ for general $b$ and $\sD$ is  invariant under $\sF$  then the general fibers of $f\colon X\to B$ are birational.
\end{thm}
\begin{rmk}\label{necessarie2}
	Before starting the proof, we note that the hypothesis on $\sD$, that is its invariance under $\sF$ and the fact that $h^0(X_b,\sO_{X_b}(D_b))=1$ for general $b$, are natural conditions for the birationality of the general fibers as we have seen in Remark \ref{necessarie}.
\end{rmk}
\begin{proof}
	The morphism $h\colon X\to Y$ obtained in Proposition \ref{esisteY} is constant on the leaves of the foliation $\sF$ and the general leaf is of dimension $n+1-k.$ Moreover, since the horizontal divisor $\sD$ is invariant under $\sF$,  ${h(\sD)}$ is at most a divisor in $Y$.
	We denote by $Z\subset Y$ the  proper Zariski closed subset containing $h(\sD)$ and the locus  of singular values of  $h$. 
	
	We fix a disk $\Delta\subset B^0$ where we can apply Proposition \ref{vettorepoli} and take a meromorphic vector field $v$ in $f^{-1}(\Delta)$ with poles on $\sD$ such that $df(v)=\frac{\partial}{\partial t}$.  Let $X_0$ be the central fiber over $\Delta$. We show that the  integral curves of $v$ passing through a general point in $X_0$  (in particular points outside $D_0$) can be defined over the whole disk $\Delta$, independently from the chosen initial point. To do that we restrict $h$ to $X_0$ and we call $h_0\colon X_0\to Y$ this restriction. We choose a initial point $x_0\in X_0\setminus h_0^{-1}(Z)$ and denote by $F_0$ the leaf of the foliation $\sF_v$ passing through $x_0$. 
	
	By Lemma \ref{vstanelker}, cf. Corollary \ref{vstanelker2}, the (Zariski) closure of $F_0$ is contained in the smooth $n+1-k$-dimensional subvariety ${h^{-1}(h(x_0))}\cap X^0$  and by our choice of $x_0$ it does not intersect the divisor $\sD$. In particular this implies that every point of $F_0$ is a regular point of the foliation $\sF_v$, since the singular locus $\Sigma_v$ is contained in $\sD$. Then $F_0$ is locally immersed as a smooth subvariety of $f^{-1}(\Delta)$. Let $g\colon F_0\to \Delta$ be the composition
	\begin{equation*}
		\xymatrix{
		F_0\ar[r]\ar_g[dr]&f^{-1}(\Delta)\ar^f[d]\\
		&\Delta}
	\end{equation*}
	that is the restriction of $f$ to the leaf. It is easy to see that $g$ is a submersion between $1$-dimensional manifolds. 
	
	We claim that $g$ is surjective. Indeed assume by contradiction that $g(F_0)$ is a strict subset of $\Delta$. We know that $g(F_0)$ is open since $g$ is a submersion. Take a point $\bar{t}$ on the boundary of $g(F_0)$ inside $\Delta$. Take a converging sequence $t_n\to \bar{t}$ with $t_n\in g(F_0)$ and choose $x_n\in F_0$ such that $g(x_n)=t_n$. Since $f$ is proper, we can find (up to passing to a subsequence) a limit  point $\bar{x}\in f^{-1}(\Delta)$ such that $f(\bar{x})=\bar{t}$. Note that $\bar{x}$ is in $\overline{F_0}\subset h^{-1}(h(x_0))\cap X^0$ hence $\bar{x}$ is a regular point of $\sF_v$. The foliation $\sF_v$ is transversal to the fiber $f^{-1}(\bar{t})$ in $\bar{x}$ and since there is a sequence of points of $F_0$ converging to $\bar{x}$, it easily follows that there exists a point in $f^{-1}(\bar{t})\cap F_0$, showing that $\bar{t}\in g(F_0)$. 
	
	Since $\Delta$ is simply connected and $F_0$ is connected, $g$ is invertible and its inverse is the parametrisation of the integral curve for the vector field $v$ passing for $x_0$, hence it is defined on the whole disk $\Delta$ as required.
	
	Repeating the argument for every point of $X_0\setminus h_0^{-1}(Z)$ and recalling that the flow of a holomorphic vector field is holomorphic, we get a holomorphic map $X_0\setminus h_0^{-1}(Z)\to X_t$ which is a biholomorphism on the image for every $t\in \Delta$. 
	
	By \cite[Theorem 2]{KO} this map gives a bimeromorphic map $X_0\to X_t$ since $X_t$ is of general type and $h_0^{-1}(Z)$ is analytic. 
	Finally by \cite{Ch}, bimeromorphic projective varieties are birational.
\end{proof}

By \cite[Theorem 1.1]{BBG}, we have the corollary
\begin{cor}
Under the same hypotheses of Theorem \ref{main}, let $X_0$ be a projective model of the fibers. Then there exists a dense open subset $B^0 \subset B$ and  a finite cover $B'\to B^0$ such that $X':=X\times_{B^0} B'$ is birational to $X_0\times B'$.
\end{cor}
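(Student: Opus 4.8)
The plan is simply to combine Theorem~\ref{main} with \cite[Theorem 1.1]{BBG}, running in reverse the first step of the proof of Theorem~\ref{birimplicasupp}.

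First I would invoke Theorem~\ref{main}: since $f\colon X\to B$ admits a calibrated volume detecting subspace defining a divisor $\sD$ to which the foliation $\sF$ is tangent, its general fibers are birational. As in the proof of Theorem~\ref{birimplicasupp}, this means there is a dense Zariski open subset $B^0\subset B$ over which $f$ is smooth and all the fibers are pairwise birational; fixing one of them yields a projective model $X_0$, and every fiber $X_b$ with $b\in B^0$ is birational to $X_0$. This is the $B^0$ and the $X_0$ of the statement.

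Next I would feed the smooth projective family $f^0\colon X^0:=f^{-1}(B^0)\to B^0$, whose fibers are all birational to the fixed model $X_0$, into \cite[Theorem 1.1]{BBG}. That result produces a finite cover $B'\to B^0$ such that the base change $X':=X^0\times_{B^0}B'=X\times_{B^0}B'$ is birational to the product $X_0\times B'$, which is exactly the assertion.

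There is essentially no obstacle here: the only point to check is that the hypotheses of \cite[Theorem 1.1]{BBG} are satisfied, namely that one has a smooth projective morphism over a smooth quasi-projective base with pairwise birational fibers, and this is precisely the situation over $B^0$ once Theorem~\ref{main} has been applied. Hence the statement follows as a formal corollary.
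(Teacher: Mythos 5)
Your argument is exactly the paper's: Theorem \ref{main} gives the pairwise birationality of the general fibers, and then \cite[Theorem 1.1]{BBG}, applied over the dense open $B^0$ where $f$ is smooth (just as in the first step of the proof of Theorem \ref{birimplicasupp}), yields the finite cover $B'\to B^0$ and the birational equivalence $X'\sim X_0\times B'$. The paper states the corollary with precisely this one-line justification, so your proposal is correct and coincides with the intended proof.
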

\begin{rmk} Since the differential forms $\omega_i=\eta_1\wedge\dots\wedge\hat{\eta_i}\wedge\dots\wedge\eta_{k+1}$, $i=1,...,k+1$ are pullback of forms on $Y$, the divisor on $X_b$ where they all vanish is comprised by components which are pullback of a divisor on $Y$ and by components which are  critical loci of the surjective morphism $h_b:=h_{|X_b}\colon X_b\to Y$, i.e. where the map is not a submersion. In the context of Theorem \ref{main}, the components that matter are the latter. For example, when $k=n$, by the same argument of Theorem \ref{birimplicasupp}, the pullback $(h^*\omega_Y)^{\vee\vee}$ shows that, for general $b$, $\xi_b$ is supported on the divisorial component  of the critical locus of $h_b$. 
\end{rmk}
\begin{rmk}\label{dimensione1}
	Under the hypotheses of Theorem \ref{main}, when $f\colon X\to B$ is a fibered surface (i.e. $n=k=1$), the general fibers are actually isomorphic since they are smooth curves. This is consistent with the following observation which is easy by local computation recalling the local expression of $v$ of Remark \ref{vlocale}: if the vector field $v$ has poles on $\sD$ and, at the same time,  $\sD$ is invariant under $\sF_v$, then $v$ is actually holomorphic. 
\end{rmk}

\begin{rmk}\label{versionemt} 
In the case $k=n$, Theorem \ref{main} can be rephrased by saying that the general fibers of $f\colon X\to B$ are birational if $U$ is strict,  
$U_b$ is Massey trivial for general $b$, and $\sD$ is invariant under $\sF$.
We refer to \cite{PZ,RZ1} for the definition of Massey trivial subspace $U_b$, with the remark that in those papers this notion is expressed saying that the adjoint image of $U_b$ is trivial. 

To show this note that, if $k=n$, the notion of volume detecting is equivalent to asking that $U$ is strict.
 Furthermore, the assumption that $\xi_b$ is supported on $D_b$ is implied by the condition of Massey triviality of $U_b$, see \cite[Theorem A]{RZ1}. 
 
Finally, the condition $h^0(X_b,\sO_{X_b}(D_b))=1$ is not needed. In fact note that the key points where this condition is used is to show the existence of the forms $s_1,\dots,s_{k+1}$ with $s_1\wedge\dots\wedge s_{k+1}=0$ (Proposition \ref{wedgezerok}) and the existence of a meromorphic vector field $v$ in $f^{-1}(\Delta)$ with $df(v)=\frac{\partial}{\partial t}$ and $s_i(v)=0$, $i=1,\dots,n+1$, (Corollary \ref{vstanelker}). If we assume that $U_b$ is Massey trivial for general $b$, the existence of the $s_i$ follows from \cite[Proposition  4.10]{RZ4}, hence we still have the morphism $h\colon X\to Y$ as in Proposition \ref{esisteY}. The existence of the meromorphic vector field follows by choosing $v$ as the (unique) lifting of the vector field $\frac{\partial}{\partial t}$ on $Y\times \Delta$ via the morphism $h\times f\colon f^{-1}(\Delta)\to Y\times \Delta$ which is a local isomorphism outside of its critical locus $\sD$.
 Given these two ingredients, the proof of Theorem \ref{main} proceeds in the exact same way.
\end{rmk}

\subsection{Bogomolov sheaves and birational fibers}
\label{bogo}

In Theorem \ref{main}, the morphism $h\colon X\to Y$ controlling the flow of the vector field $v$ and ensuring the birationality of the fibers is obtained thanks to the volume detecting subspace $U$ and the fact that the general deformations are supported.

There are also classical cases where a similar rational map exists and we can use the same argument of Theorem \ref{main}; for example we recall the theory by Bogomolov, see \cite{B, Cam, V}.
A line bundle (not
necessarily saturated) $\sL\subset\Omega_X^p$ is called a Bogomolov sheaf if its Iitaka dimension is $k(\sL)=p$. A Bogomolov sheaf induces a dominant rational map $h\colon X\dashrightarrow Y$  over a smooth variety $Y$ with $\dim Y=p$ and such that $\sL$ coincides with $h^*\omega_Y$ on a Zariski open set of $X$. It is not true that $Y$ has to be of general type but $h$ has to be of general type in the sense of Campana \cite{Cam}.

Using our relative setting, consider the fibration $f\colon X\to B$ and assume that there exists a Bogomolov sheaf $\sL\subset \Omega^n_X$ and such that the maps of relative differentials $ \Omega^n_X\to \Omega^n_{X/B}$ is an isomorphism when restricted to $\sL$ (i.e. the differential forms of $\sL$ do not come from $B$). In particular the associated map $h$ is dominant when restricted to the general fiber $X_b$. 

Hence we can consider the foliation  $\sF\subset T_X$ induced by $h$ and the divisor $\sD$ of the $f$-horizontal components of the locus where the morphism $h^*{\omega_Y}\otimes f^*\omega_B\to \omega_X$ is not an injection of vector bundles.

We have the following theorem 
	\begin{thm}\label{campanavoisin}
		Let $f\colon X\to B$ be a semistable fibration with fibers of general type.
		Let $\sL\subset \Omega^n_{X}$ be a Bogomolov sheaf such that the morphism $ \Omega^n_X\to \Omega^n_{X/B}$ is an isomorphism when restricted to $\sL$ and such that $Y$ is of general type. If $\sD$ is invariant under $\sF$ then the general fibers of $f\colon X\to B$ are birational.
		\end{thm}
\begin{proof}
		We resolve the rational map $h\colon X\dashrightarrow Y$  via a birational morphism $\pi\colon \widetilde{X}\to X$ to obtain $\tilde{h}\colon \widetilde{X}\to Y$.
	
	For our purpose, it is enough to prove that the general fibers $\widetilde{X}_b$  of $\tilde{f}:=f\circ\pi$ are birational.
	The proof is similar to Theorem \ref{main}, so we only give an idea.

	Given a suitable open subset $\Delta\subset B$, consider the generically finite morphism $\tilde{f}\times\tilde{h}\colon  \tilde{f}^{-1}(\Delta)\to \Delta\times Y$ and take on $\tilde{f}^{-1}(\Delta)$ the meromorphic vector field with poles on $\widetilde{\sD}$ obtained by lifting the vector field $\frac{\partial}{\partial t}$ of $\Delta\times Y$ outside the critical divisor of $\tilde{f}\times\tilde{h}$.
	
 To repeat the proof of Theorem \ref{main}, we only need to show that $\widetilde{\sD}$ is invariant under $T_{\widetilde{X}/Y}$, but this is true since $\sD$ is invariant under $\sF$ and the exceptional components of $\widetilde{\sD}$ cannot cover $Y$ since by hypothesis it is a variety of general type.
\end{proof}

\section{Examples and applications}\label{sez6}
\subsection{Local case}
We present two local examples showing why the assumption of $\sD$ being invariant under $\sF$ (and hence also under $\sF_v$ by Corollary \ref{vstanelker2}) is required  in Theorem \ref{main}. We use the notation of the previous sections. 

\begin{expl}\label{ex1}
	Take an open set of $X$ where the map $f$ is a submersion, in particular we choose holomorphic coordinates $t,x_1,\dots,x_n$ and $f$ is given by the projection on the first coordinate. Furthermore, assume that the divisor $\sD$ is locally given by $x_1=0$ and that 
	$$
	v=\frac{\partial}{\partial t}+\frac{1}{x_1}(\sum_{i=1}^n \frac{\partial}{\partial x_i})
	$$ is the vector field with poles on $\sD$ and lifting $\frac{\partial}{\partial t}$; cf. Remark \ref{vlocale}. Note that in this case 
		$$
	\hat{v}=x_1\frac{\partial}{\partial t}+(\sum_{i=1}^n \frac{\partial}{\partial x_i})
	$$ hence we immediately see that $\sD$ is not invariant under $\sF_v$ as required in Theorem \ref{main}.
	Now choose a starting point on the central fiber $x_0=(0,\lambda_1,\lambda_2,\dots,\lambda_n)$, with $\lambda_1$ in the right half of the complex plane. An easy computation shows that the integral curve passing through this point is 
	$$
	\gamma_{x_0}(t)=(t,\sqrt{2t+\lambda_1^2}, \sqrt{2t+\lambda_1^2}-\lambda_1+\lambda_2,\dots,\sqrt{2t+\lambda_1^2}-\lambda_1+\lambda_n)
	$$ where we denote by $\sqrt{}$ the branch of the square root defined on $\mC\setminus \mR_{<0}$.
	
	For fixed $x_0$ as above, the curve $\gamma_{x_0}$ is defined on a open disk centred in zero and of radius  $\text{dist}(\lambda_1^2,\mR_{<0})/2$, hence it is immediately clear that, moving the point $x_0$  close to the divisor $\sD$, i.e. $\lambda_1\to 0$, these curves are not defined on a common disk $\Delta$ containing the origin and of strictly positive radius independent of the starting point.
\end{expl}
In Example \ref{ex1}, the morphism $h$ and the associated foliation $\sF$ do not play any role; the next example shows that even if $\sD$ is invariant under $\sF_v$, this is not sufficient to apply the argument of the Main Theorem when $\sD$ is not invariant under $\sF$.
\begin{expl}
	For simplicity we study a 3-dimensional case, i.e. $n=2$. Assume again that $f$ is a submersion and take local coordinates $t,x,y$. Again the divisor $\sD$ is given by $x=0$ but now take as vector field 
	$$
	v=\frac{\partial}{\partial t}+\frac{\partial}{\partial x}+\frac{y^2}{x}\frac{\partial}{\partial y}.
	$$ In this case 
	$$
	\hat{v}=x\frac{\partial}{\partial t}+x\frac{\partial}{\partial x}+{y^2}\frac{\partial}{\partial y}
	$$ and the foliation $\sF_v$ is regular outside of $\Sigma_v=(x=y=0)\subset \sD$.
	
	We denote by $\log$ the branch of the logarithm defined on $\mC\setminus\mR_{\leq0}$, and note that the curves
		$$
	\gamma_{x_0}(t)=(t,t+\lambda, -\frac{1}{\log(t+\lambda)+\mu})
	$$ are integral curves passing on the central fiber through points of the form $x_0=(0,\lambda,-\frac{1}{\log(\lambda)+\mu})$ (we assume of course that $\lambda\notin \mR_{\leq0}$ and $\log(\lambda)+\mu\neq0$).
	
	Here the problem is the same of the previous example: moving the point $x_0$  close to the divisor $\sD$, i.e. $\lambda\to 0$, the curves $\gamma_{x_0}$ are not defined on a common disk $\Delta$ containing the origin and of strictly positive radius independent of $x_0$. But the difference with the previous example is that here $\sD$ is invariant under $\sF_v$; the problem in this case arises from the map $h$ and its associated fibration $\sF$. 
	
	First note that if we denote as before by $F_0$ the leaf of $\sF_v$ through $x_0$, the point $(-\lambda,0,0)\in \Sigma_v$ is in the closure of $F_0$ for every $x_0$ as above. Since we recall that $h$ is a morphism constant on the leaves of this foliation, the general fibers of $h$ are of dimension 2: a fiber containing the leaf through $(0,\lambda,-\frac{1}{\log(\lambda)+\mu})$ must contain also all the other leaves passing through $(0,\lambda,-\frac{1}{\log(\lambda)+\mu'})$. Concretely, we can think of $h(t,x,y)=x-t$ and note that the foliation $\sF$ of rank 2 given by $h$ is generated by the vector fields $\frac{\partial}{\partial t}+\frac{\partial}{\partial x}, \frac{\partial}{\partial y}$ and contains the foliation $\sF_v$ as desired. 
	
	The argument of the Main Theorem does not work in this case because $\sD$ is not invariant under $\sF$; as a consequence, as already pointed out, there is a point of $\Sigma_v$ in the closure of the leaves $F_0$ and the proof Theorem \ref{main} strongly relies on this not happening.
\end{expl}
The following figures depict the issue in the above examples and show how they are solved by the assumptions of Theorem \ref{main}.
\begin{figure}[H]
	\includegraphics[scale=.9]{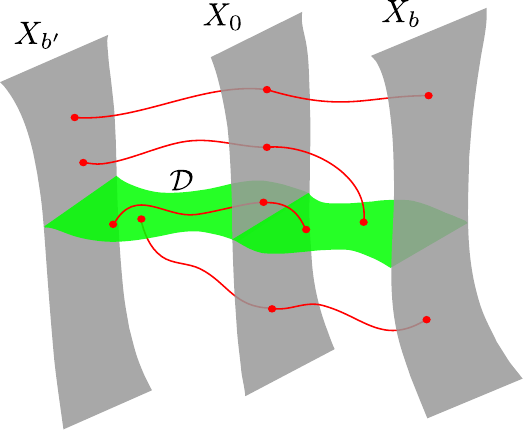}
	\caption{If the divisor $\sD$ is not invariant under the foliation $\sF$, the integral curves of $v$ (in red) \lq\lq meet\rq\rq\ $\sD$ at their boundary. The closer the starting point is to $\sD$, the smaller the set where the integral curve is defined, hence the flow of $v$ does not  give a biholomorphism of the fibers outside the poles, even up to shrinking the base $\Delta$.}
\end{figure}
\begin{figure}[H]
	\includegraphics[scale=.9]{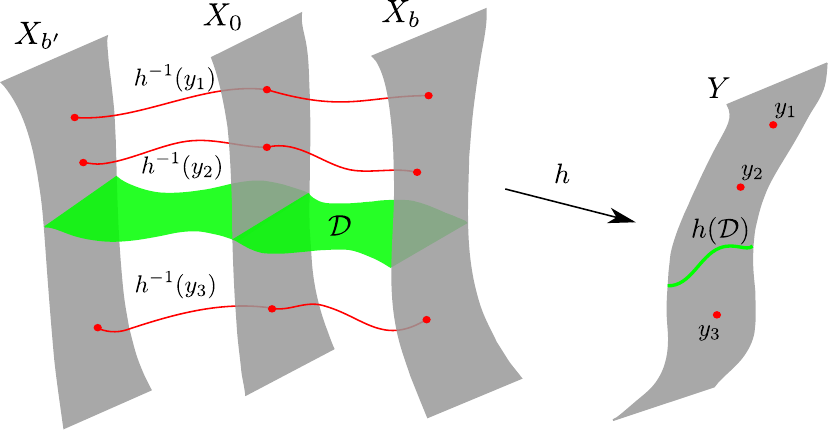}
		\caption{When  the divisor $\sD$ is invariant under the foliation $\sF$, the flow is well behaved. The image of $\sD$ is not dominant in $Y$ and the (general) integral curve is contained in a fiber of $h$ and does not intersect $\sD$.}
\end{figure}
\subsection{Global case}
The following global examples show that the conditions on the global sections of the line bundle $\sO_{X_b}(D_b)$ and of invariance of $\sD$ under the foliation $\sF$ in the main theorem are necessary.  

\begin{expl}\label{doubledouble} Let $j\colon B\hookrightarrow Y$ be a smooth curve embedded into a smooth surface $Y$. To simplify our exposition assume that $Y$ is of general type and without any rational curve. Let $\Gamma\subset B\times Y$ be the image of the diagonal embedding $b\mapsto (b,j(b))$ of $B$ inside $B\times Y$. We denote by $\pi_1\colon B\times Y\to B$ and $\pi_2\colon B\times Y\to Y$ the two natural projections. Let $\sigma\colon T\to B\times Y$ be the blow-up of $B\times Y$ along $\Gamma$. We denote by $\widetilde\Gamma:=\sigma^* \Gamma$ the exceptional divisor and by $S_b:= \sigma^{-1}\circ\pi_1^{-1}(b)$, the fiber over $b\in B$; the morphism $\pi_1\circ\sigma\colon T\to B$ is a smooth fibration with fibers $S_b$. By \cite[Corollary 7.15]{Ha} it follows that letting $\sigma_b:=\sigma_{|S_b}$, $\sigma_b\colon S_b\to Y$ is the blow-up of $Y$ at $j(b)$. We denote by $F_b$ the $-1$-exceptional curve on $S_b$ and we stress that $F_b=\widetilde\Gamma_{| S_b}=\sigma^{-1}(b,j(b))$. 
	
The example is constructed considering a family of ramified double covers of $S_b$ as follows, see \cite{CS}.
 Let $\sM'\sim 2\sL'$ be a very ample $2$-divisible sheaf on $B\times Y$. We consider the divisors $\sM:=\sigma^*\sM'$ and $\sL:=\sigma^*\sL'$ on  $T$. Let $\sG\in |\sM|$ be a smooth divisor and let $\rho\colon X\to T$ be the $2$-to-$1$ cover branched over $\sG$. Let $f\colon X\to B$ the fibration given by $\pi_1\circ\sigma\circ\rho$ and $h\colon X\to Y$ the morphism $\pi_2\circ\sigma\circ\rho$. 
 Note that $\rho$ induces a $2$-to-$1$ cover $\pi_b\colon X_b\to S_b$ of $S_b$ branched over $G_b:=\sG_{|S_b}\in | M_b |$, where $M_b=\sM_{|S_b}$. We recall that in this case $R_b:=\pi_b^{-1}(G_b)$ is the ramification divisor and that $\pi_{b| R_b}\colon R_b\to G_b$ is an isomorphism. We set $L_b:=\sL_{| S_b}$ and $E_b:=\pi_b^{-1}(F_b)$. 
 
 In this example we take as $U$ a suitable vector space of one-forms of $Y$ pulled back via $h$, which in this example plays the role of the morphism $h$ of Proposition \ref{esisteY}. 
 We note that, using the pullback $h^*\omega_Y$ as in Theorem \ref{birimplicasupp}, it easily follows that the image of the Kodaira-Spencer map at $b$, denoted as always by $\xi_b\in H^1(X_b, T_{X_b})$, is supported on $E_b+R_b$, the critical divisors of $h_b$. Note however that $h^0(X_b, \sO_{X_b}(E_b+R_b))>1$ and the components given by the $R_b$ are not invariant under $\sF=T_{X/Y}$. Hence we are not in the hypotheses of Theorem \ref{main} and in fact the fibers $X_b$ are not birational:
 \begin{prop}\label{nonsono} The fibers of $f\colon X\to B$ are not birationally equivalent.
 \end{prop}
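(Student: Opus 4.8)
The plan is to show that the fibers $X_b$ cannot be birational by exhibiting an invariant of the birational class that genuinely varies with $b$, and the natural candidate is encoded in the morphism $h\colon X\to Z$ together with the structure of the double cover. The key observation is that $\pi_b\colon X_b\to S_b$ is the double cover of $S_b$ branched over $G_b\in|M_b|$, and $S_b=\sigma_b\colon S_b\to Z$ is the blow-up of the \emph{fixed} surface $Z$ at the moving point $j(b)$. So $X_b$ comes with a distinguished map to $Z$ whose critical locus detects $j(b)$. First I would argue that, since $Z$ is of general type with no rational curves, the morphism $h_b\colon X_b\to Z$ (equivalently the composite $X_b\to S_b\to Z$) is, up to birational equivalence of the source, an invariant of the birational class of $X_b$: any birational map $X_b\dashrightarrow X_{b'}$ must be compatible with the respective maps to $Z$, because $Z$ (being of general type without rational curves) receives no nonconstant map from a rational curve, so the indeterminacy of the birational map is killed after composing with $h$, and one gets an induced birational self-map of $Z$ commuting with the two structure morphisms.

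The second step is to extract from $h_b$ a genuinely $b$-dependent piece of data. The branch divisor of the double cover $\pi_b$ is $G_b=\sG_{|S_b}$, and $\sigma_b^{-1}(j(b))=F_b$ is the exceptional $(-1)$-curve; the key point is that $G_b$ meets $F_b$ (since $\sM'$ is very ample, $\sG$ is a general member, and $\sG\cap\widetilde\Gamma$ is a nonempty divisor on $\widetilde\Gamma$), so the image in $Z$ of the branch locus of $h_b$, namely $\sigma_b(G_b)=\pi_2(\sigma(\sG))_b$ together with the point $j(b)$, records $j(b)$ intrinsically: $j(b)$ is the unique point of $Z$ over which $h_b$ fails to be finite (the fiber contains the positive-dimensional $E_b=\pi_b^{-1}(F_b)$). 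Thus a birational equivalence $X_b\simeq X_{b'}$ would force, via the induced commuting birational self-map of $Z$ and the fact that $\Aut Z$ and $\mathrm{Bir}\,Z$ are finite for $Z$ of general type, an identification of the distinguished points $j(b)$ and $j(b')$ up to this finite group — which fails for $b$ in a dense subset of $B$, since $j$ is an embedding and the orbit of each point under the finite group $\mathrm{Bir}\,Z$ is finite.

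Concretely, I would organize the argument as follows. (i) Observe $h\colon X\to Z$ is a morphism whose general fiber $h^{-1}(z)$ is the double cover of the blow-up $\mathrm{Bl}_z Z$ restricted appropriately — actually one should instead fiber over $B$ and note $h_b$ is the composite $X_b\to S_b\xrightarrow{\sigma_b} Z$; its Stein factorization / its non-finite locus is exactly $\{j(b)\}$. (ii) Let $\phi\colon X_b\dashrightarrow X_{b'}$ be birational. Resolve it; since $Z$ has no rational curves and is of general type, the composites $h_b$ and $h_{b'}$ extend to a birational map $\psi\colon Z\dashrightarrow Z$ with $h_{b'}\circ\phi = \psi\circ h_b$ (on a common resolution). (iii) Since $Z$ is of general type, $\psi$ is a biregular automorphism and $\Aut Z$ is finite. (iv) $\psi$ sends the non-finite locus of $h_b$ to that of $h_{b'}$, i.e. $\psi(j(b))=j(b')$. (v) Since $\Aut Z$ is finite and $j$ is injective, for general $b,b'$ (indeed for all but finitely many pairs in each $\Aut Z$-orbit) we have $j(b')\notin \Aut Z\cdot j(b)$, a contradiction. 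Hence the general fibers are not birational.

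\textbf{Main obstacle.} The delicate point is step (ii): rigorously producing the commuting birational self-map $\psi$ of $Z$ from the birational map $\phi$ of the fibers, and ruling out that the indeterminacy loci of $\phi$ or of the resolution of $h_b$ map onto something that spoils the comparison. This requires using the hypothesis ``$Z$ of general type with no rational curves'' in an essential way: exceptional divisors of any resolution are uniruled, hence map to points under any morphism to $Z$, so $h_b$ and $h_{b'}$ factor compatibly through the birational model, forcing $\psi$ to be well-defined as a birational — hence, by general type, biregular — map. One must also check that the non-finite locus of $h_b$ is genuinely $\{j(b)\}$ and not larger or empty: this uses that $F_b$ is contracted by $\sigma_b$ while $G_b\cap F_b\neq\emptyset$ forces $E_b=\pi_b^{-1}(F_b)$ to be connected and positive-dimensional over $j(b)$, so $h_b$ is not finite there, whereas away from $j(b)$ the map $\sigma_b$ is an isomorphism and $\pi_b$ is finite. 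Once these two geometric facts are nailed down, the rigidity of $\mathrm{Bir}\,Z$ for $Z$ of general type closes the argument.
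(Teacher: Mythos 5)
There is a genuine gap at your step (ii), and it is the crux of the argument. From a birational map $\phi\colon X_b\dashrightarrow X_{b'}$, the ``no rational curves in $Z$'' argument only gives you that the composite $h_{b'}\circ\phi$ extends to a \emph{morphism} $X_b\to Z$ (exceptional curves of a resolution are rational, hence contracted in $Z$, and one descends by rigidity). It does \emph{not} give you a birational self-map $\psi$ of $Z$ with $h_{b'}\circ\phi=\psi\circ h_b$: since $h_b$ has degree $2$, such a factorization would require $h_{b'}\circ\phi$ to be constant on the fibers of $h_b$, i.e.\ it would require $\phi$ to conjugate the covering involution of $X_{b'}\to S_{b'}$ into that of $X_b\to S_b$. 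Nothing forces this; a surface can carry several degree-two maps onto $Z$ that do not differ by post-composition with an element of $\Aut Z$, and a birational self-equivalence of the source need not intertwine them. Consequently the key identification $\psi(j(b))=j(b')$, and with it the contradiction with the finiteness of $\Aut Z$, is not established. (A smaller slip: for general $b$ the branch curve $G_b$ is in fact \emph{disjoint} from $F_b$, since $M_b=\sigma_b^*M_b'$ gives $M_b\cdot F_b=0$ --- the paper uses $E_b\cdot R_b=0$ explicitly --- but this does not affect your correct observation that $h_b$ has the positive-dimensional fiber $E_b$ over $j(b)$ and is finite elsewhere.)

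The paper's proof sidesteps the descent problem by applying finiteness of automorphisms to the \emph{fiber} side rather than to $Z$: if the general fibers were all birational, their minimal models (unique, since the fibers are of general type) would all be one surface $Y$, and $Y$ would then carry infinitely many distinct degree-two morphisms onto $Z$ --- equivalently infinitely many distinct (covering) involutions, distinguished because the branch loci and contracted points $j(b)$ vary with $b$ --- contradicting the finiteness of the automorphism/birational automorphism group of a surface of general type. If you want to rescue your line of reasoning, you would have to replace the unjustified descent to $Z$ by this kind of counting of involutions (equivalently, by the finiteness of $\mathrm{Bir}(X_b)$), at which point you are essentially reproducing the paper's argument; note also that the paper only needs to rule out that \emph{all} general fibers are birational, which is why an argument using infinitely many values of $b$ suffices there, whereas your proposal aims at the stronger pairwise statement and therefore leans even harder on the missing step.
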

 \begin{proof} Let $Z_b$ be the minimal model of $X_b$. By generality, if $Z_b$ were birationally equivalent to $X_{b'}$ then $Z_b$ would be biregular to $Z_{b'}$ and letting $Z:=Z_b$ we would have that $Z$ could be obtained as a double covering of $Y$ through an infinite number of distinct morphisms. In particular $Z$ would be a surface of general type with an infinite group of automorphisms. 
 \end{proof}

\end{expl}
In Example \ref{doubledouble}, we see that both the conditions on the supporting divisor from Theorem \ref{main} are not satisfied. In the next example, the supporting divisor is fiberwise non movable but it is not invariant under the foliation. 

\begin{expl}
	Consider $C$ a smooth  hyperelliptic curve of genus $g$ with hyperelliptic involution $\sigma$ and let $V:=C\times C$ be the self product and $Y:=\text{Sym}^2C$ the symmetric product. We denote by $\Delta_C\subset V$ the diagonal and $\Delta\subset Y$ its image in $Y$ via the natural ramified covering $\pi\colon V\to Y$. We also denote by $\delta$ the class giving the double covering $\pi$, $\Delta\simeq 2\delta$, and by $L:=\pi(C\times \{\Sigma_{i=1}^{g-1} P_i\})\subset Y$ the divisor associated to the sum of $g-1$ distinct Weierstrass points $P_i\in C$.
	
	Similarly to the previous example, we consider a family $f\colon X\to B$ with a map $h\colon X\to Y$ such that the restriction $h_b\colon X_b\to Y$ is a $2:1$ covering branched over a divisor in  $|2L+\Delta|$. 
	
	To do this we need to show that the general divisor in $|2L+\Delta|$ is smooth.
	
	First note that  $|\sO_Y(2L)|$ is the linear system giving the morphism
	$$
	Y\to\mP^{\frac{g(g+1)}{2}-1}
	$$ obtained by the composition of the covering $Y\to \text{Sym}^2\mP^1\simeq\mP^2$ and the Veronese map $\nu_{g-1}\colon \mP^2\to \mP^{\frac{g(g+1)}{2}-1}$. In particular $|\sO_Y(2L)|$ is base point free and $h^0(Y,\sO_Y(2L))=\frac{g(g+1)}{2}$.
	
	Now note that we have the  following exact sequence on $Y$
	$$
	0\to \sO_Y(2L)\to \sO_Y(2L+\Delta)\to \sO_\Delta\to 0.
	$$
	To prove that $2L+\Delta$ has no base points we will show that $h^0(Y,\sO_Y(2L+\Delta))=\frac{g(g+1)}{2}+1$.
	
	We have
	$$
	\pi^*(\sO_{Y}(2L+\Delta))=\sO_V(K_V+2\Delta_C)\quad\text{and}\quad \pi_* \pi^*(\sO_Y(2L+\Delta))=(\sO_Y(2L+\Delta))\oplus (\sO_Y(2L+\delta))
	$$ by projection formula and the fact that $\pi_*\sO_{V}=\sO_Y\oplus \sO_Y(-\delta)$. Since these bundles have the same number of global sections, it is enough to compute $h^0(V,\sO_V(K_V+2\Delta_C))$ and $h^0(Y,\sO_Y(2L+\delta))$.
	\begin{lem}\label{conto}
		$h^0(Y,\sO_Y(2L+\delta))=\frac{g(g-1)}{2}$.
	\end{lem} 
	\begin{proof}
		We have
		$$
		\pi^*(\sO_{Y}(2L+\delta))=\sO_V(K_V+\Delta_C)$$ and $$h^0(V,\sO_V(K_V+\Delta_C))=h^0(C,K_C)^2=g^2.$$ Since
		$$\pi_* \pi^*(\sO_Y(2L+\delta))=(\sO_Y(2L+\delta))\oplus (\sO_Y(2L))
		$$  
		and $h^0(Y,\sO_Y(2L))=\frac{g(g+1)}{2}$ as recalled above, we have $h^0(Y,\sO_Y(2L+\delta))=g^2-\frac{g(g+1)}{2}=\frac{g(g-1)}{2}$.
	\end{proof}
	\begin{lem}
		$h^0(V,\sO_V(K_V+2\Delta_C))=g^2+1.$
	\end{lem} 
	\begin{proof}
		We have the short exact sequence 
		$$
		0\to \sO_V(K_V+\Delta_C)\to \sO_V(K_V+2\Delta_C)\to \sO_{\Delta_C}\to 0.
		$$ Since $h^0(V,\sO_V(K_V+\Delta_C))=g^2$,  we only need to show that not all the global sections of $\sO_V(K_V+2\Delta_C)$ vanish on $\Delta_C$.
		
		To see this, consider the difference map into the Jacobian of $C$, $d\colon V\to J(C)$ and recall that  $K_V+2\Delta_C$ is linearly equivalent to the pullback $d^*(2\theta)$, see for example \cite{W}. 
		Since $d(\Delta_C)$ is the origin of $J(C)$ and not all the elements of $|2\theta|$ vanish on the origin we are done.
	\end{proof}
	
	From these lemmas we have $h^0(Y,\sO_Y(2L+\Delta))=g^2+1-\frac{g(g-1)}{2}=\frac{g(g+1)}{2}+1$ as desired.

	As in Example \ref{doubledouble}, note that the Kodaira-Spencer class $\xi_b$ of the fiber $X_b$ is supported on $R_b:=h_b^{-1}(2L+\Delta)$.
	An easy computation shows that the fibers $X_b$ are of general type, hence with the same argument of Proposition \ref{nonsono}, they are not birational. 
	
	With computations similar to Lemma \ref{conto}, we see that in this case $h^0(X_b,\sO_{X_b}(R_b))=1$, so this is an explicit example where the obstacle to the birationality of the fibers is  the fact that the divisor given by the $R_b$ is not invariant under $T_{X/Y}$. 
\end{expl}

 \subsection{Volumetric theorem}
As final application, we can prove the global version of the so called Volumetric Theorem \cite[Theorem 1.5.3]{PZ} without assuming the smoothness of $f$.

\begin{thm}\label{volumetrico}
	Let $f\colon X\to B$ be a semistable family with a morphism $\Phi\colon X\to A$ to an abelian variety such that,  for general $b\in B$, $\Phi_b:=\Phi_{|X_b}\colon X_b\to A$ has generic degree one and its image generates $A$ as an abelian group. 
	Let $W\subset 
	H^{0}(A,\Omega^{1}_{A})$ be a general $(n+1)$-dimensional subspace 
	and denote by $U\subset H^0(B,\mD^1)$ the image of the pullback $\Phi^*W$ in $\mD^1$. Assume that $U_b$ is Massey trivial for general $b\in B$.
	Then the general fibers of $f\colon X\to B$ are birational.
\end{thm}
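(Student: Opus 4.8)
\emph{Strategy.} The plan is to carry out the argument of Theorem~\ref{volteoremavolume} over the compact base $B$, the hypothesis that $U$ is strict playing the role that smoothness of $f$ played there: it is what lets us produce the global $1$-forms attached to $U$ even when $f$ has singular fibres. Once these forms are available the generic degree one property of the auxiliary morphism yields birationality directly, so --- in contrast with Theorem~\ref{main} --- no meromorphic vector field, no flow, and not even the tangency of $\sF$ to $\sD$ (which does hold, by Remark~\ref{rmktangenza}) is needed.

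\emph{Producing the forms and the morphism $h$.} By the generality of $W$ and \cite[Theorem~1.3.3]{PZ} the subspace $U_b=\Phi_b^*W$ is strict for general $b\in B$; it is Massey trivial by hypothesis, and $U$ is strict by hypothesis. These are exactly the inputs of the logarithmic statement \cite[Proposition~4.7]{RZ4} on the semistable fibration $f\colon X\to B$, the global analogue of Proposition~\ref{wedgezerok} for $k=n$ (where the vanishing $\eta_1\wedge\dots\wedge\eta_{n+1}=0$ is automatic for dimensional reasons). It provides de Rham closed forms $s_1,\dots,s_{n+1}\in H^0(X,\Omega^1_X)$, the unique liftings of a basis of $U$, with $s_1\wedge\dots\wedge s_{n+1}=0$. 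By the generalised Castelnuovo--de Franchis theorem \cite[Theorem~1.14]{Ca2}, exactly as in Proposition~\ref{esisteY}, the $s_i$ define a surjective morphism with connected fibres $h\colon X\to Y$ onto a normal $n$-dimensional variety $Y$, with $s_i\in h^*H^0(Y,\Omega^1_Y)$.

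\emph{The generic degree and conclusion.} Let $\Delta\subset B\setminus E$ be a disc over which $f$ is a submersion; over $f^{-1}(\Delta)$ we are in the situation of Theorem~\ref{volteoremavolume}. Set $\mC^a=H^0(A,\Omega^1_A)^\vee$ and let $p\colon\mC^a\to\mC^{n+1}=W^\vee$ be the quotient. Since the $s_i$ are de Rham closed they are exact on $\widetilde{f^{-1}(\Delta)}=f^{-1}(\Delta)\times_A\mC^a$, and their primitives assemble into a map $\tilde h$; by the standard projection argument, and using that $W$ is general, the fibrewise image of $\tilde h$ differs from that of $p\circ\tilde\Phi_b$ only by a general linear transformation and a translation, and $p\circ\tilde\Phi_b$ is generically of degree one. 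Hence $h_b\colon X_b\to Y$ is generically of degree one for general $b\in\Delta$; as a general fibre of $h$ is a curve meeting $X_b$ in finitely many points, $h_b$ is dominant onto $Y$, and therefore birational. Since $h$ is globally defined and $B\setminus E$ is connected, running this over a cover of $B\setminus E$ by discs shows that $X_b$ is birational to $Y$ for general $b\in B$; in particular the general fibres of $f$ are birational to one another.

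\emph{Main obstacle.} The delicate point is the first step, namely producing the \emph{global} closed forms $s_i$ with $s_1\wedge\dots\wedge s_{n+1}=0$ when $f$ is merely semistable rather than smooth. This is exactly where the hypothesis of strictness of $U$ is used, together with the logarithmic sheaves $\Omega^\bullet_{X/B}(\log)$ of Section~\ref{sez1}; one must also control the torsion phenomena of Remark~\ref{generalita} by restricting to a suitable open part of $B$. Once past this, the argument is the easier, generic-degree-one half of the circle of ideas, as in Theorem~\ref{volteoremavolume}.
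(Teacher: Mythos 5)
Your argument is essentially correct, but it takes a genuinely different route from the paper's. The paper proves Theorem \ref{volumetrico} as a corollary of Theorem \ref{main} via Remark \ref{versionemt}: it checks that $U$ is a calibrated volume detecting subspace ($\xi_b$ supported on $D_b$ via Massey triviality and \cite[Theorem A]{RZ1}; $D_b$ exceptional with $h^0(X_b,\sO_{X_b}(D_b))=1$ because, $W$ being general, $D_b$ does not depend on $W$, by \cite[Proposition 3.1.6]{PZ}) and that $\sF$ is tangent to $\sD$ by arguments as in Remark \ref{rmktangenza}, and then relies on the meromorphic vector field and flow machinery of the Main Theorem. You instead globalize the direct argument of Theorem \ref{volteoremavolume}: produce global closed liftings with vanishing wedge, build $h\colon X\to Y$ by Castelnuovo--de Franchis as in Proposition \ref{esisteY}, and use the generic-degree-one comparison with $p\circ\widetilde{\Phi}_b$ on the universal cover to conclude that the general fibre is birational to $Y$, dispensing with the vector field of Proposition \ref{vettorepoli}, the tangency of $\sF$ to $\sD$, and the appeals to \cite{KO} and \cite{Ch}. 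This is legitimate and arguably simpler --- the paper itself notes, in the remark preceding the theorem, that the degree-one mechanism makes the local case easier --- while the paper's route has the merit of exhibiting the statement as an instance of Theorem \ref{main}. One point to tighten in your write-up: the first step. If \cite[Proposition 4.7]{RZ4} is only available in the form in which it is used in Theorem \ref{volteoremavolume} (for the restricted family over a disc), then to get the liftings $s_1,\dots,s_{n+1}$ globally on the semistable $X$ you should invoke Proposition \ref{wedgezerok} with $k=n$, and that proposition requires exactly the two verifications you glossed over: that $\xi_b$ is supported on $D_b$ (Massey triviality plus \cite[Theorem A]{RZ1}, cf.\ Remark \ref{versionemt}) and that $h^0(X_b,\sO_{X_b}(D_b))=1$ (generality of $W$ and \cite[Proposition 3.1.6]{PZ}, which show $D_b$ is exceptional). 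With those two checks made explicit, your proof is complete.
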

\begin{proof}
	This can be proved as a corollary of Theorem \ref{main}, cf. Remark \ref{versionemt}.
	Indeed $U_b$  is strict by the generality assumption of $W$, see \cite[Theorem 1.3.3]{PZ}, hence it is an $n$-volume detecting subspace. 
	
	Using the notation of the previous section,
	note that the divisor $D_b$, given by restriction of $\sD$ to each fiber $X_b$, is given by the exceptional divisors of $h_b$ and possibly the
	pullback of a divisor on $Y$. In particular, it is not difficult to see that $\sD$ is invariant under the foliation $\sF$ by the fact that the divisor where $\Phi_b$ is not an isomorphism is contracted by $\Phi_b$.
\end{proof}

Using techniques similar to those developed for the proof of Theorem \ref{main}, we can also reprove the original Volumetric Theorem.
\begin{thm}\label{volteoremavolume}
	Let $f\colon \sX\to \Delta$ be a smooth local family over a disk with a morphism $\Phi\colon \sX\to A$ to an abelian variety such that,  for general $b\in \Delta$, $\Phi_b:=\Phi_{|X_b}\colon X_b\to A$ has generic degree one and its image generates $A$ as an abelian group. 
	Let $W\subset 
	H^{0}(A,\Omega^{1}_{A})$ be a general $(n+1)$-dimensional subspace 
	and assume $U_b:=\Phi_b^*W$ is Massey trivial for {general} $b\in \Delta$. Then the general fibers of $f\colon \sX\to \Delta$  are birational.
\end{thm}
\begin{proof}
	As in the global case, for {general} $b\in \Delta$, the vector space $U_b$ is strict by the generality assumption of $W$ and Massey trivial by hypothesis. As before denote by $U\subset H^0(B,\mD^1)$ the image of the pullback $\Phi^*W$ in $\mD^1$. Up to shrinking $\Delta$, we can then assume that $U$ is not only fiberwise Massey trivial and strict, but also on the whole disk, according to Definition 4.4 and Definition 4.5 of \cite{RZ4}. By \cite[Proposition 4.7]{RZ4} we can find de Rham closed forms $s_1,\dots,s_{n+1}\in H^0(\sX,\Omega^1_\sX)$ which are liftings of a basis of $U$ and such that
	$s_1\wedge\dots\wedge s_{n+1}=0$.  Note that this is just a local version of Proposition \ref{wedgezerok} with $k=n$.
	As in Proposition \ref{esisteY}, the $s_i$ define a foliation $\sF$ and we note that the field $\mC(\sF)$ is contained in $\mC(X_b)$, $b\in\Delta$. We have then a meromorphic map $\sX\dashrightarrow Y'$ to a $n$-dimensional variety $Y'$ and the pullback of global holomorphic 1-forms on $Y'$ give global holomorphic 1-forms on $\sX$.
	
	The universal covering $\widetilde{\sX}$ of $\sX$ has then a holomorphic map $\widetilde{\sX}\to \mC^l$ where $\mC^l=H^{0}(Y',\Omega^{1}_{Y'})^\vee$. This map is generically of degree one when restricted to the fibers since $\Phi_b$ is.

	 By taking the quotient by the fundamental group, we get a holomorphic map $\sX\to Y$ to a $n$-dimensional variety $Y$ and, by our observation on the degree, all the fibers $X_b$ are actually birational to $Y$, giving the desired result.
\end{proof}

	\begin{rmk}
	A key difference between Theorem \ref{main} and the local Volumetric Theorem \ref{volteoremavolume} is that the former is stated over a compact curve $B$, while the latter over a disk $\Delta$. The reason is that in the proof of Theorem \ref{main} the compactness of $X$ and $B$ ensures that $h(\sD)$ is a proper analytic subset of $Y$; in the local setting the image of a divisor via a holomorphic map is not necessarily analytic.
	
	On the other hand, in Theorem \ref{volteoremavolume}, the argument is easier since the morphisms are of generic degree one  on the fibers and this gives immediately the birationality, without the need to control the image of $\sD$ in $Y$.
\end{rmk}


\end{document}